\definecolor{vio}{rgb}{0.54, 0.17, 0.89}
\newtheorem{theorem}{Theorem}[section]
\newtheorem{lemma}[theorem]{Lemma}
\newtheorem{corollary}[theorem]{Corollary}
\numberwithin{equation}{section}
\theoremstyle{remark}
\newtheorem*{remark}{Remark}
\def\reals{\hbox{\rm I\kern-.18em R}}
\def\complexes{\hbox{\rm C\kern-.43em
\vrule depth 0ex height 1.4ex width .05em\kern.41em}}
\def\field{\hbox{\rm I\kern-.18em F}} 
\begin{document}

\title[The error term in the explicit formula of Riemann--von Mangoldt]{On the error term in the explicit formula of Riemann--von Mangoldt}

\author{Michaela Cully-Hugill and Daniel R. Johnston}
\address{School of Science, UNSW Canberra, Australia}
\email{m.cully-hugill@adfa.edu.au}
\address{School of Science, UNSW Canberra, Australia}
\email{daniel.johnston@adfa.edu.au}
\date\today
\keywords{}

\begin{abstract}
    We provide an explicit $O(x\log x/T)$ error term for the Riemann--von Mangoldt formula by making results of Wolke (1983) and Ramar\'e (2016) explicit. We also include applications to primes between consecutive powers, the error term in the prime number theorem and an inequality of Ramanujan.
\end{abstract}

\maketitle

\section{Introduction}
\subsection{Background}
Let
\begin{equation*}
    \psi(x)=\sum_{n\leq x}\Lambda(n)=\sum_{p^k\leq x}\log p
\end{equation*}
be the Chebyshev prime-counting function over prime powers $p^k$, with integers $k\geq 1$. The (truncated) Riemann--von Mangoldt formula is written as
\begin{equation}\label{riemannvoneq}
    \psi(x)=x-\sum_{\substack{\rho=\beta+i\gamma\\|\gamma|\leq T}}\frac{x^\rho}{\rho}+E(x,T),
\end{equation}
where the sum is over all non-trivial zeros $\rho=\beta+i\gamma$ of the Riemann-zeta function $\zeta(s)$ that have $|\gamma|\leq T$, and $E(x,T)$ is an error term. 

Several authors have provided estimates for $E(x,T)$. With reasonable conditions on $x$ and $T$, the standard bound given in the literature (e.g.\ \cite[Chapter 17]{Davenport_1980}) is
\begin{equation}\label{davenbound}
    E(x,T)=O\left(\frac{x\log^2x}{T}\right),
\end{equation}
which in 1983 was improved by Goldston \cite{Goldston_83} to 
\begin{equation}\label{goldstoneq}
    E(x,T)=O\left(\frac{x\log x\log\log x}{T}\right).
\end{equation}
Under the Riemann hypothesis, one can do even better. Namely, Littlewood \cite{little1924two} proves that with the Riemann hypothesis one gets
\begin{equation}\label{rambound}
    E(x,T)=O\left(\frac{x\log x}{T}\right),
\end{equation}
which was later improved by Goldston \cite{goldston1982result} to $O(x/T)$. Wolke \cite{Wolke_1983} and Ramar\'e \cite{Ramare_16_Perron} also claimed to prove averaged versions of the Riemann-von Mangoldt formula with unconditional $O(x/T)$ error terms. However, through correspondence with Ramar\'e, several errors in these works have been uncovered.

In 2016, Dudek \cite[Theorem 1.3]{Dudek_16p} gave an explicit version of \eqref{davenbound}. Namely, he proved the following theorem.
\begin{theorem}[Dudek]\label{dudekthm}
    Take $50<T<x$ for half an odd integer $x>e^{60}$. Then
    \begin{equation*}
        E(x,T)=O^*\left(\frac{2x\log^2 x}{T}\right).
    \end{equation*}
\end{theorem}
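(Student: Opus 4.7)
The plan is to follow the classical approach in \cite[Chapter 17]{Davenport_1980}, but to keep every constant and threshold explicit. The starting point is the truncated Perron formula applied to $-\zeta'(s)/\zeta(s)$. Choosing $c = 1+1/\log x$, I would write
\[
\psi(x) = \frac{1}{2\pi i}\int_{c-iT}^{c+iT}\paranthesis{-\frac{\zeta'(s)}{\zeta(s)}}\frac{x^s}{s}\,ds + R(x,T),
\]
and first bound $R(x,T)$ by the standard sum $\sum_n \Lambda(n)(x/n)^c\min\{1,1/(T\abs{\log(x/n)})\}$. The hypothesis that $x$ is half an odd integer plays its usual role here: it ensures $\abs{x-n}\geq \half$ for every integer $n$, giving $\abs{\log(x/n)}\gg 1/x$ and allowing explicit control of the contribution of $n$ close to $x$. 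With $c = 1+1/\log x$, this step delivers the first explicit multiple of $x\log^2 x/T$.

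Next I would shift the contour to a rectangle with left side at $\mathrm{Re}(s) = -U$, let $U\to\infty$, and collect residues. The pole at $s=1$ contributes the main term $x$; each non-trivial zero $\rho$ with $\abs{\gamma}\leq T$ contributes $-x^\rho/\rho$; the pole at $s=0$ contributes the bounded constant $-\zeta'(0)/\zeta(0) = \log(2\pi)$; and the trivial zeros sum to $-\half\log(1-x^{-2})$, which is negligible for $x>e^{60}$. All of these collapse into $E(x,T)$.

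The remaining pieces are the two horizontal segments at heights $\pm T$ and the left vertical segment, the latter vanishing as $U\to\infty$. For the horizontal integrals I would use the partial fraction expansion of $\zeta'/\zeta$ together with an explicit count of zeros in intervals of unit length at height $T$ to obtain $\abs{\zeta'(\sigma\pm iT)/\zeta(\sigma\pm iT)} \ll \log^2 T$ uniformly for $-1\leq\sigma\leq c$, after perturbing $T$ by at most $O(1)$ to avoid any narrow neighbourhood of an ordinate of a zero (the perturbation is absorbed into the final bound). Substituting this into the two horizontal integrals yields another explicit multiple of $x\log^2 x/T$.

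The main obstacle is keeping the accumulated constant at or below $2$. Every triangle inequality, every dyadic split near $n=x$, and every explicit bound on $\zeta'/\zeta$ threatens to enlarge the constant, and the residue at $s=0$ together with the trivial-zero contribution must be shown to fit inside the $O^*(2x\log^2 x/T)$ envelope rather than being tracked as separate summands. The saving comes from the generous hypotheses: $x>e^{60}$ forces $\log x$ to be large enough that all subordinate $O(x\log x)$, $O(x)$ and $O(\log x)$ pieces are dwarfed by $2x\log^2 x/T$, and $T>50$ similarly tames the $\log^2 T$ factor arising from the horizontal integrals. Balancing these explicit inequalities under the stated hypotheses, and verifying numerically that the constant can be pushed down to $2$, is the technical crux of the argument.
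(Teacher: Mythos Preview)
The paper does not prove this theorem. Theorem~\ref{dudekthm} is quoted as a prior result of Dudek \cite[Theorem~1.3]{Dudek_16p}; it appears in the introduction purely as background, and the paper's own contribution is the sharper Theorem~\ref{mainthm}. So there is no proof in the paper against which your proposal can be compared.

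That said, your sketch is essentially the argument Dudek carries out: Perron's formula with $c=1+1/\log x$, the half-odd-integer condition to control the near-diagonal terms, a contour shift to the left, and explicit $O(\log^2 T)$ bounds on $\zeta'/\zeta$ along the horizontal segments obtained by choosing $T$ (or a nearby height) to keep a positive distance from the ordinates of zeros. The only point where your description drifts from Dudek is the contour: Dudek moves the left edge only to $\Re(s)=-1$ rather than sending $U\to\infty$, so the trivial zeros do not enter and the bound $|\zeta'/\zeta(s)|<9+\log|s|$ on $\Re(s)=-1$ handles the vertical segment directly. This is a minor bookkeeping difference and does not affect the final constant. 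Your identification of the ``technical crux'' --- squeezing the accumulated constant down to $2$ under the hypotheses $x>e^{60}$, $T>50$ --- is accurate.
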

The first author \cite[Theorem 2]{cully2023primes} recently improved on Dudek's result by making \eqref{goldstoneq} explicit. In this paper, we give an explicit $O(x\log x/T)$ bound for $E(x,T)$, thereby making \eqref{rambound} unconditional and explicit. This is done by reworking the papers of Wolke \cite{Wolke_1983} and Ramar{\'e} \cite{Ramare_16_Perron}: we optimise parts of their proofs and avoid the errors related to their averaging arguments. This approach gives significantly better explicit bounds for $E(x,T)$ than in previous works.

Our overarching approach is to split $E(x,T)$ into two smaller error terms, say $E_1(x,T)$ and $E_2(x,T)$. We rework \cite[Theorem 1.1]{Ramare_16_Perron} to bound $E_1(x,T)$ in a general way that can be applied to other arithmetic functions besides $\psi(x)$. Then, we rework the proof of \cite[Theorem 2]{Wolke_1983} and use explicit zero-free regions for $\zeta(s)$ to reach an explicit estimate of the form $E_2(x,T) = O(x/T)$. Such an estimate for $E_2(x,T)$ becomes insignificant compared to that for $E_1(x,T)$ for large $x$.

To demonstrate the usefulness of our results, we use our bounds for $E(x,T)$ to improve the main theorems in \cite{cully2023primes} and \cite{P_T_2021}.

\subsection{Statement of main results}
Our main result is as follows.
\begin{theorem}\label{mainthm}
    For any $\alpha\in(0,1/2]$ there exist constants $M$ and $x_M$ such that for $\max\{51,\log x\}<T<(x^{\alpha}-2)/2$,
    \begin{equation}\label{oureq}
        \psi(x)=x-\sum_{\substack{|\gamma|\leq T}}\frac{x^\rho}{\rho}+O^*\left(M\frac{x\log x}{T}\right)
    \end{equation}
    for all $x\geq x_M$. Some admissible values of $x_M$, $\alpha$ and $M$ are $(40,1/2,5.03)$ and $(10^3, 1/100, 0.5597)$, with more given in Table \ref{maintable} in the appendix.
\end{theorem}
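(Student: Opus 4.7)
The plan is to follow the decomposition sketched in the introduction: write $E(x,T) = E_1(x,T) + E_2(x,T)$, where $E_1$ captures the error from truncating Perron's formula at height $T$, and $E_2$ captures the error from shifting the resulting contour past the non-trivial zeros. The starting point is the truncated Perron identity
\begin{equation*}
\psi(x) = \frac{1}{2\pi i}\int_{c-iT}^{c+iT}\left(-\frac{\zeta'}{\zeta}(s)\right)\frac{x^s}{s}\,ds + E_1(x,T),
\end{equation*}
for a convenient choice such as $c = 1 + 1/\log x$; the main terms $x - \sum_{|\gamma|\leq T} x^\rho/\rho$ appearing in \eqref{oureq} are then produced as residues when one shifts the integral to the left.

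For $E_1$, the idea is to rework \cite[Theorem 1.1]{Ramare_16_Perron}, which gives a flexible Perron-type bound applicable to Dirichlet series other than $-\zeta'/\zeta$. The resulting estimate has its dominant contribution from integers $n$ with $|\log(x/n)|$ small, i.e.\ within $O(x/T)$ of $x$. After separating these near-diagonal terms, handling each via the trivial bound $\Lambda(n) \leq \log n$ and summing, and absorbing the far-diagonal terms into the standard geometric tail, one expects a bound
\begin{equation*}
|E_1(x,T)| \leq M_1\,\frac{x \log x}{T}
\end{equation*}
with an explicit $M_1$. Getting a clean constant here requires the corrections to Ramar\'e's averaging arguments mentioned in the introduction, a careful choice of $c$, and an appropriate threshold separating near- from far-diagonal $n$.

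For $E_2$, shift the contour to a vertical line $\Re(s)=-U$ and let $U\to\infty$, collecting the residue $x$ at $s=1$, the residues $-x^\rho/\rho$ over non-trivial zeros with $|\gamma|\leq T$, and smaller contributions from the trivial zeros and from $s=0$. What remains are the two horizontal integrals at heights $\pm T$. Using the sharpest available explicit classical zero-free region for $\zeta(s)$, together with an explicit upper bound for $|\zeta'/\zeta(\sigma \pm iT)|$ valid to the left of that region, these horizontal integrals are shown to contribute only $O(x/T)$, which is of lower order. This is the quantitative form of the Wolke-type argument in \cite{Wolke_1983}. The hypothesis $T < (x^\alpha - 2)/2$ is what guarantees that the near-diagonal range in $E_1$ stays inside a dyadic window of $x$ (so that uniform bounds apply), while $T > \log x$ merely excludes the uninteresting regime where even \eqref{davenbound} gives little.

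The main obstacle will be pinning down the explicit constants $M$ tabulated in the theorem with as wide a range of admissible $(x_M,\alpha)$ as possible. This forces a simultaneous optimisation of the two sub-bounds, since the constants hidden in the $E_1$ analysis depend on the choice of $c$ and on how aggressively one truncates near-diagonal terms, while those hidden in $E_2$ depend on which explicit zero-free region and which explicit bound for $\zeta'/\zeta$ are used. Careful book-keeping of every dependence on $\alpha$ and $x_M$ will be required so that the admissible triples listed in the appendix can be produced by a numerical search at the end of the argument.
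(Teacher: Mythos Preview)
Your decomposition $E = E_1 + E_2$ and the overall strategy match the paper. However, there is a genuine gap in your treatment of $E_1$: the trivial bound $\Lambda(n) \leq \log n$ is \emph{not} enough to reach $O(x\log x/T)$.

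Following Ramar\'e's Perron formula with $\kappa = 1 + 1/\log x$, the near-diagonal contribution is
\[
\frac{1}{\pi T}\int_{\theta'/T}^{\lambda}\Bigg(\sum_{|\log(x/n)|\leq u}\Lambda(n)\Bigg)\frac{e^{\kappa u}}{u^2}\,du.
\]
The interval $|\log(x/n)|\leq u$ has length of order $ux$. Bounding each $\Lambda(n)$ by $\log x$ and counting all integers gives an inner sum of order $ux\log x$, after which the $u$-integral contributes a factor $\log(\lambda T/\theta')$, which is of order $\alpha\log x$. You recover only Davenport's bound $O(x\log^2 x/T)$.

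The missing idea --- and the whole content of the improvement from \eqref{davenbound} to \eqref{rambound} --- is a sieve bound on primes in short intervals. The paper invokes the explicit Brun--Titchmarsh theorem of Montgomery--Vaughan,
\[
\theta(x_+) - \theta(x_-) \leq \frac{2(x_+ - x_-)\log x_+}{\log(x_+ - x_-)},
\]
together with explicit bounds on $\psi-\theta$. Since $u\geq\theta'/T > \theta'x^{-\alpha}$, the length $x_+-x_-$ exceeds a constant times $x^{1-\alpha}$, so the denominator $\log(x_+-x_-)$ is at least $(1-\alpha)\log x + O(1)$ and cancels the $\log x_+$ in the numerator. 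This yields $\sum_{n}\Lambda(n) = O(ux)$ rather than $O(ux\log x)$, saving exactly the required factor of $\log x$. The $\psi-\theta$ correction is $O(x^{1/2})$, and keeping that term $O(x\log x/T)$ is the actual reason $\alpha\leq 1/2$ is needed --- not, as you suggest, to keep the near-diagonal range inside a dyadic window (it always is, since $\lambda$ is a bounded constant independent of $T$).

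A secondary point: your description of $E_2$ understates the work. One cannot bound $|\zeta'/\zeta(\sigma\pm iT)|$ directly, since $T$ may lie near the ordinate of a zero; the paper first moves to a nearby height $\tau$ chosen to avoid zeros (Lemma~\ref{djslem}), and on the portion of the horizontal contour closest to $\sigma=1$ it replaces $\zeta'/\zeta$ by Selberg's mollified expression before estimating. This is the substance of Wolke's method and occupies most of Section~\ref{wolkesect}.
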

Using Theorem~\ref{mainthm} we are able to obtain the following result.
\begin{theorem}\label{powerthm}
    There is at least one prime between $n^{140}$ and $(n+1)^{140}$ for all $n\geq 1$.
\end{theorem}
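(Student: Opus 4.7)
The strategy is to bound $\psi((n+1)^{140}) - \psi(n^{140})$ from below using Theorem~\ref{mainthm} and verify that it is strictly positive. Applying the theorem at $x = (n+1)^{140}$ and at $x = n^{140}$ with a common truncation parameter $T$ and subtracting gives
\[
\psi((n+1)^{140}) - \psi(n^{140}) = \bigl((n+1)^{140} - n^{140}\bigr) - \sum_{|\gamma|\le T}\frac{(n+1)^{140\rho} - n^{140\rho}}{\rho} + R,
\]
where $|R| \le 2M(n+1)^{140}\log\bigl((n+1)^{140}\bigr)/T$ and the main term is at least $140\,n^{139}$ by the mean value theorem. The task reduces to making both the zero sum and $R$ smaller than $140\,n^{139}$ for every $n \ge 1$.

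To control the zero sum I would take $T$ no larger than the current numerically verified height $H_{0}$ of the Riemann hypothesis (for instance the Platt--Trudgian value $H_{0} \approx 3\cdot 10^{12}$), so every zero $\rho = \beta + i\gamma$ in the range has $\beta = 1/2$. Then the identity $(y^{\rho}-x^{\rho})/\rho = \int_{x}^{y} t^{\rho-1}\,dt$ together with $|t^{\rho-1}| = t^{-1/2}$ yields
\[
\left|\sum_{|\gamma|\le T}\frac{(n+1)^{140\rho} - n^{140\rho}}{\rho}\right| \;\le\; 2(n+1)^{70}\sum_{|\gamma|\le T}\frac{1}{|\rho|},
\]
and the latter sum admits an explicit bound of order $\log^{2} T$. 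Choosing $\alpha = 1/2$ in Theorem~\ref{mainthm} makes the constraint $T < ((n+1)^{140\alpha}-2)/2$ easy to satisfy for all $n\ge 1$, and the condition $140\,n^{139} > 2(n+1)^{70}\sum |\rho|^{-1} + |R|$ then holds provided that $n$ lies in an explicit interval $n_{0} \le n \le n_{1}$, where $n_{1}$ is set by the point at which the Ramar\'e-type error $R$ becomes comparable to the main term (roughly, $n \log n \sim H_{0}$).

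For $1 \le n < n_{0}$ the statement is established by a direct finite search for a prime in each interval $[n^{140},(n+1)^{140}]$, and for $n > n_{1}$ I would appeal to an existing explicit short-interval prime result valid for large $x$ (such as an explicit Baker--Harman--Pintz bound or a refinement of \cite{cully2023primes}), noting that $(n+1)^{140}-n^{140} \asymp n^{139}$ eventually exceeds any admissible short-interval length at height $n^{140}$. The principal obstacle is optimising the choice of $T$ and the row $(x_{M},\alpha,M)$ from Table~\ref{maintable} in order to push $n_{1}$ as high as possible while keeping the computational check for $n < n_{0}$ modest, so that the analytic range, the computational range, and the large-$n$ short-interval range can be glued together to cover every $n \ge 1$ with the exponent $140$.
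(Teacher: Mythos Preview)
Your proposal has the roles inverted, and this creates a genuine gap in the large-$n$ regime.

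By capping $T$ at the Riemann-hypothesis verification height $H_0\approx 3\cdot 10^{12}$, you can indeed treat every zero as lying on $\beta=1/2$, and the zero sum is then harmless. But the Ramar\'e-type error $|R|\le 2M(n+1)^{140}\log\bigl((n+1)^{140}\bigr)/T$ is only smaller than the main term $140\,n^{139}$ while roughly $n\log n\lesssim H_0$, i.e.\ while $\log x$ is at most a few thousand. Beyond that point your argument stops, and you propose to cite ``an existing explicit short-interval prime result'' for the tail. No such result exists for the exponent $140$: Baker--Harman--Pintz is not explicit, and \cite{cully2023primes} only reaches exponent $155$. Reducing $155$ to $140$ \emph{is} the content of Theorem~\ref{powerthm}, and the improvement lives precisely in the large-$x$ range you are trying to outsource.

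The paper handles this the other way round. It lets $T=x^{\mu}$ grow with $x$ (taking $\mu\approx 0.008$ and the row $(\log x_M,\alpha,M)=(10^{3},1/100,0.6651)$ from Table~\ref{maintable}), so the $O(x\log x/T)$ error stays under control for all large $x$. The zero sum is then bounded not via $\beta=1/2$ but via the zero-free region $\nu(t)$ of Section~\ref{zfsect}, exactly as in \cite{cully2023primes}; the sharper constant $M$ is what pushes the admissible exponent down to $140$. This analytic argument covers $\log x\ge 4242$, while explicit short-interval estimates and a finite computation cover the bounded range below. In short: the zero-free region, not the RH verification height, is the essential input for large $n$, and your plan omits it.
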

This improves upon \cite[Thm.~1]{cully2023primes} by the first author, which asserts that there is always a prime between consecutive $155^{\text{th}}$ powers.

Theorem~\ref{mainthm}, combined with other recent results, also allows us to improve the error term in the prime number theorem for large $x$ (cf. \cite[Theorem 1]{P_T_2021}).
\begin{theorem}\label{errorthm}
    Let $R=5.5666305$. For each $\{X,A,B,C,\epsilon_0\}$ in Table \ref{errortable} we have
    \begin{equation*}
        \left|\frac{\psi(x)-x}{x}\right|\leq A\left(\frac{\log x}{R}\right)^B\exp\left(-C\sqrt{\frac{\log x}{R}}\right),
    \end{equation*}
    and for all $\log x\geq X$,
    \begin{equation*}
        |\psi(x)-x|\leq\epsilon_0x.
    \end{equation*}
\end{theorem}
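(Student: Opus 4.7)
The plan is to derive the explicit error term in the prime number theorem by feeding Theorem~\ref{mainthm} into a standard contour/partial-summation argument that uses an explicit Mossinghoff--Trudgian zero-free region for $\zeta(s)$. From \eqref{oureq},
$$\psi(x)-x=-\sum_{|\gamma|\leq T}\frac{x^{\rho}}{\rho}+O^{*}\!\left(M\,\frac{x\log x}{T}\right),$$
so it suffices to bound the sum over zeros. The constant $R=5.5666305$ is the Mossinghoff--Trudgian zero-free region constant, i.e.\ every nontrivial zero with $|\gamma|\geq H_0$ satisfies $\beta\leq 1-1/(R\log|\gamma|)$, while for $|\gamma|<H_0$ the numerical verification of the Riemann hypothesis (to height at least $3\cdot 10^{12}$) gives $\beta=\tfrac{1}{2}$.

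First I would split the sum at $H_0$. The RH-verified part contributes at most a constant times $x^{1/2}\log^{2}H_0$, which is absorbed into the other error for all $\log x\geq X$ in the target range. For the range $H_0<|\gamma|\leq T$, I bound $x^{\beta}\leq x\exp(-\log x/(R\log|\gamma|))$ and use the explicit zero-counting estimate of Rosser/Trudgian
$$N(T)\leq\frac{T}{2\pi}\log\frac{T}{2\pi}-\frac{T}{2\pi}+c_1\log T+c_2$$
to convert the sum into a Stieltjes integral. Since $t\mapsto\exp(-\log x/(R\log t))/t$ is increasing on the relevant interval for our optimal $T$, partial summation yields
$$\left|\sum_{H_0<|\gamma|\leq T}\frac{x^{\rho}}{\rho}\right|\ll x(\log T)^{2}\exp\!\left(-\frac{\log x}{R\log T}\right).$$
Adding the explicit-formula error yields a total bound of the form
$$|\psi(x)-x|\ll x(\log T)^{2}\exp\!\left(-\frac{\log x}{R\log T}\right)+\frac{Mx\log x}{T}.$$

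Next I would optimise in $T$. Setting $T=\exp(\sqrt{R\log x})$ (or a tunable multiple thereof) makes $\log T=\sqrt{\log x/R}\cdot\sqrt{R}$ up to a constant and balances both terms to the common shape $x\exp(-C\sqrt{\log x/R})$ with a logarithmic prefactor, giving precisely the form $A(\log x/R)^{B}\exp(-C\sqrt{\log x/R})$ claimed. The secondary bound $|\psi(x)-x|\leq\epsilon_{0}x$ for $\log x\geq X$ then follows by evaluating the first bound at $\log x=X$, since the right-hand side is eventually decreasing in $x$.

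The main obstacle is not conceptual but computational: to fill each row of Table~\ref{errortable} one must carefully track every implicit constant, including the Mossinghoff--Trudgian leading constants in the zero-count, the value of $M$ furnished by Theorem~\ref{mainthm} (which itself varies with $x_M$ and $\alpha$), the boundary contribution from low-lying zeros up to $H_0$, and the optimal choice of $T$ for each window of $x$. One must also verify compatibility with the hypothesis $\max\{51,\log x\}<T<(x^{\alpha}-2)/2$ of Theorem~\ref{mainthm}, which restricts the admissible range of $T$ in terms of $\alpha$ and forces us to use smaller $\alpha$ (and thus a different $M$) for the rows with very large $X$. The explicit constants $A,B,C,\epsilon_0$ are then extracted by a finite numerical optimisation over $T$ and by monotonicity checks that the stated inequalities hold for every $\log x\geq X$.
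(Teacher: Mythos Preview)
Your approach is the classical zero-free-region argument, and while it does yield a bound of the shape $A(\log x/R)^{B}\exp(-C\sqrt{\log x/R})$, it cannot reach the constants in Table~\ref{errortable}. Using only $\beta\le 1-1/(R\log|\gamma|)$ together with the Riemann--von Mangoldt count $N(T)$, the zero sum is at most $\asymp x(\log T)^{2}\exp\bigl(-\log x/(R\log T)\bigr)$, and balancing this against the Perron error $Mx\log x/T$ forces $\log T\sim\sqrt{\log x/R}$ and hence $C=1$. No tuning helps: if $\log T=c\sqrt{\log x/R}$ then the two terms carry exponents $c^{-1}\sqrt{\log x/R}$ and $c\sqrt{\log x/R}$, so $\min(c,c^{-1})\le 1$ caps the attainable $C$ at $1$. (Your stated choice $T=\exp(\sqrt{R\log x})$ in fact gives the worse value $C=1/R$.) Since every row of Table~\ref{errortable} has $C>1.89$, your method cannot establish the theorem as stated.

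What the paper actually does is follow Pintz's method (as in \cite{P_T_2021}): fix a parameter $\sigma$ just below $1$ and split the zeros according to whether $\beta\le\sigma$ or $\beta>\sigma$. Zeros with $\beta\le\sigma$ contribute at most $x^{\sigma-1}$ per zero, which is negligible. Zeros with $\beta>\sigma$ are controlled not by $N(T)$ but by an explicit \emph{zero-density} estimate $N(\sigma,T)\le C_{1}(\sigma)T^{8(1-\sigma)/3}(\log T)^{5-2\sigma}+C_{2}(\sigma)\log^{2}T$ (Lemma~\ref{densitylemma}); this is the key ingredient your outline is missing. With $\log T=2\sqrt{\log x/R}$ one obtains $B=(5-2\sigma)/2$ and $C=(16\sigma-10)/3$, so $\sigma\approx 0.99$ gives $C\approx 1.95$ and $B\approx 1.51$, matching the table. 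The column headed $\sigma$ in Table~\ref{errortable} is precisely this zero-density threshold, which has no counterpart in a pure zero-free-region argument.
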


\def\arraystretch{1.5}
\begin{table}[h]
\centering
\caption{Values of $X$, $A$, $B$, $C$ and $\epsilon_0$ for Theorem~\ref{errorthm}. Here, $\sigma$ is a parameter that appears in the proof of Theorem~\ref{errorthm}. The entry for $X=3600$ is specifically included for Corollary \ref{ramancor}.}
\begin{tabular}{|c|c|c|c|c|c|}
\hline
$X$ & $\sigma$ & $A$ & $B$ & $C$ & $\epsilon_0$\\
\hline
$1000$ & $0.980$ & $269.1$ & $1.520$ & $1.893$ & $6.89\cdot 10^{-6}$ \\
\hline
$2000$ & $0.984$ & $264.8$ & $1.516$ & $1.914$ & $3.48\cdot 10^{-10}$\\
\hline
$3000$ & $0.986$ & $264.3$ & $1.514$ & $1.925$ & $1.42\cdot 10^{-13}$\\
\hline
$3600$ & $0.988$ & $275.2$ & $1.512$ & $1.936$ & $2.04\cdot 10^{-15}$\\
\hline
$4000$ & $0.988$ & $266.5$ & $1.5212$ & $1.936$ & $1.61\cdot 10^{-16}$\\
\hline
$5000$ & $0.990$ & $350.4$ & $1.510$ & $1.946$ & $4.74\cdot 10^{-19}$\\
\hline
$6000$ & $0.990$ & $267.8$ & $1.510$ & $1.946$ & $1.83\cdot 10^{-21}$\\
\hline
$7000$ & $0.990$ & $266.9$ & $1.510$ & $1.946$ & $1.38\cdot 10^{-23}$\\
\hline
$8000$ & $0.990$ & $266.9$ & $1.510$ & $1.946$ & $1.44\cdot 10^{-25}$\\
\hline
$9000$ & $0.992$ & $280.5$ & $1.508$ & $1.957$ & $1.30\cdot 10^{-27}$\\
\hline
$10000$ & $0.992$ & $268.6$ & $1.508$ & $1.957$ & $2.06\cdot 10^{-29}$\\
\hline
\end{tabular}
\label{errortable}
\end{table}

The values of $\epsilon_0$ in Table \ref{errortable} are 40--80$\%$ smaller than those in \cite[Table 1]{P_T_2021}. Replacing $A$ with $A_1=A+0.1$ gives a corresponding expression for $\theta(x)$ (see Corollary \ref{errorcor}). It should be noted that the methods used to prove Theorem~\ref{errorthm} have been expanded on in recent preprints \cite{fiorisharper2022,johnstonsome2022}. 

By repeating the argument on pages 877--880 of \cite{P_T_2021}, we obtain the following application to an inequality of Ramanujan on the prime counting function $\pi(x)$. 
\begin{corollary}\label{ramancor}
    For all $x\geq\exp(3604)$ we have
    \begin{equation*}
        \pi(x)^2<\frac{ex}{\log x}\pi\left(\frac{x}{e}\right).
    \end{equation*}
\end{corollary}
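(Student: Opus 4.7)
The plan is to reproduce Platt and Trudgian's argument from \cite[pp.~877--880]{P_T_2021} essentially verbatim, substituting the improved explicit bound for $|\psi(x)-x|$ furnished by Theorem~\ref{errorthm} (in particular, the row $X=3600$ of Table~\ref{errortable}, which is stated to be included precisely for this application and which gives $|\psi(x)-x|\leq 2.04\cdot 10^{-15}\,x$ for $\log x\geq 3600$). Since Corollary~\ref{ramancor} asserts the inequality only for $\log x\geq 3604$, the input is automatically valid throughout the range of interest.

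The first step is to convert the explicit bound on $\psi(x)$ into one on $\theta(x)$, using the identity $\psi(x)-\theta(x)=\sum_{k\geq 2}\theta(x^{1/k})$; this is the conversion mentioned in the paragraph after Theorem~\ref{errorthm} that effects the replacement $A\mapsto A_1=A+0.1$. Next I would convert this $\theta$-estimate into an explicit bound of the form $|\pi(x)-\mathrm{li}(x)|\leq \eta(x)\cdot x/\log x$ via the standard partial-summation identity
\[
\pi(x)=\frac{\theta(x)}{\log x}+\int_2^x\frac{\theta(t)}{t\log^2 t}\,dt,
\]
with an analogous estimate for $\pi(x/e)$. Substituting both into
\[
F(x):=\frac{ex}{\log x}\,\pi(x/e)-\pi(x)^2
\]
and expanding, the main term collapses (after multiplying through by $\log^2 x\cdot(\log x-1)/x^2$) to the shape $1+(\log x)\bigl(r(x/e)-2r(x)\bigr)+Q(r)$, where $r(y)=\pi(y)\log y/y-1$ and $Q(r)$ collects the quadratic remainders. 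Verifying $F(x)>0$ then reduces to checking that $(\log x)\eta(x)$ is bounded by an absolute constant strictly less than $1$ at the endpoint $x=\exp(3604)$.

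The main obstacle is purely a bookkeeping one: one must propagate the explicit constants carefully through the $\psi\to\theta\to\pi$ chain, and then track the main and error contributions when expanding $F(x)$, as in \cite{P_T_2021}. No delicate optimisation is required, however, because the numerical margin produced by the new $\epsilon_0$ at $X=3600$ is enormous---one finds $(\log x)\eta(x)$ of order $10^{-11}$ at $x=\exp(3604)$, so $F(x)$ is positive by a comfortable margin there, and the dominance of the main term only strengthens for larger $x$. The threshold $\exp(3604)$ rather than $\exp(3600)$ arises simply to absorb the contributions of the integral and of the prime-power correction at the boundary.
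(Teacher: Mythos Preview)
Your approach is correct and is essentially identical to the paper's: both feed the $X=3600$ row of the improved error bound (in its $\theta$-form, i.e.\ Corollary~\ref{errorcor}) into the formulae on page~879 of \cite{P_T_2021} and read off the threshold $\exp(3604)$. One small clarification: the quantity the Platt--Trudgian machinery actually requires is the full decaying function $A_1(\log x/R)^B\exp(-C\sqrt{\log x/R})$ from Corollary~\ref{errorcor}, not merely the constant $\epsilon_0$, since it is the exponential decay that makes the residual condition monotone in $x$ and hence reducible to a check near the left endpoint---your phrase ``the dominance of the main term only strengthens for larger $x$'' implicitly uses this, so be sure to invoke the functional form rather than just $\epsilon_0$ when writing it up.
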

Corollary \ref{ramancor} improves \cite[Theorem 2]{P_T_2021} by a factor of $\exp(311)$.

\subsection{Outline of paper}
We begin in Section \ref{sectram} by making an explicit and simplified version of a truncated Perron formula due to Ramar\'e \cite{Ramare_16_Perron}. In Section \ref{zfsect} we state a number of zero-free regions from the literature that will be required throughout the paper. In Section \ref{wolkesect}, we make explicit an argument of Wolke \cite{Wolke_1983}. These results are combined in Section \ref{mainsect} to prove Theorem~\ref{mainthm}. Applications of these results (Theorems \ref{powerthm} and \ref{errorthm}) are in Sections \ref{powersect} and \ref{errorsect}.

\section{An explicit truncated Perron formula}\label{sectram}
In this section we prove the following error estimate for the truncated Perron formula. This result is a self-contained variant of \cite[Theorem 1.1]{Ramare_16_Perron}, which we have simplified and optimised for our purposes.

\begin{theorem}\label{thm:new-main}
    Let $F(s) = \sum_{n\geq 1} a_n/n^s$ be a Dirichlet series over complex $s$, and $\kappa>0$ be a real parameter chosen larger than the abscissa of absolute convergence of $F(s)$. Also let $\theta'=2/(\sqrt{\pi^2+4}+\pi)$. For any $T>0$, $x\geq 1$, $\kappa>\kappa_a$, and $\lambda\geq\theta'/T$,
    \begin{align*}
        &\sum_{n\leq x} a_n = \frac{1}{2\pi i} \int_{\kappa-iT}^{\kappa+iT} F(s) \frac{x^s}{s}\mathrm{d}s \\
        &\qquad\qquad\qquad\qquad + O^{*}\left(\frac{x^\kappa}{\pi \lambda T} \sum_{n\geq 1} \frac{|a_n|}{n^\kappa}  +  \frac{1}{\pi T}\int_{\theta'/T}^{\lambda} \sum_{|\log(x/n)| \leq u} |a_n| \frac{e^{\kappa u}}{u^2}\mathrm{d}u\right).
    \end{align*}
\end{theorem}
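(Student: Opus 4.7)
The plan is to start from an explicit pointwise form of the truncated Perron integral and then handle the troublesome near-diagonal range $n\approx x$ via the parameter $\lambda$. First I would close the contour (adding horizontal segments at $\Im s=\pm T$ and a far vertical line) and pick up the pole at $s=0$ to derive, for $y>0$, an inequality of the shape
$$\left|\frac{1}{2\pi i}\int_{\kappa-iT}^{\kappa+iT}\frac{y^s}{s}\,ds-\mathbf{1}_{y>1}\right|\le\frac{y^\kappa}{\pi T}\,\mathcal{R}(|\log y|,T),$$
where $\mathcal{R}(u,T)=1/u$ is the standard Perron bound (valid for $u$ not too small) and a bounded auxiliary estimate handles $u$ near $0$. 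The constant $\theta'=2/(\sqrt{\pi^2+4}+\pi)$ is the positive root of $\theta^2+\pi\theta-1=0$, which I expect to appear as the precise crossover point where the two branches of $\mathcal{R}$ coincide.

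With this in hand, absolute convergence on $\Re s=\kappa>\kappa_a$ lets me interchange sum and integral and write
$$\frac{1}{2\pi i}\int_{\kappa-iT}^{\kappa+iT}F(s)\frac{x^s}{s}\,ds=\sum_{n\le x}a_n+\sum_{n\ge 1}a_n\,R\!\left(\tfrac{x}{n},T,\kappa\right),$$
where $R(y,T,\kappa)$ is the pointwise error above. I then split the remainder by $u_n:=|\log(x/n)|$. For $u_n>\lambda$, the bound $1/u_n\le 1/\lambda$ together with $(x/n)^\kappa=x^\kappa n^{-\kappa}$ produces, upon extending positivity to all~$n$, the first error term $x^\kappa/(\pi\lambda T)\sum_n |a_n|/n^\kappa$. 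For $u_n<\theta'/T$, the bounded branch of $\mathcal{R}$ combined with the assumption $\lambda\ge\theta'/T$ ensures these terms are absorbed into the already-bounded quantities.

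For the intermediate range $u_n\in[\theta'/T,\lambda]$ I would use $(x/n)^\kappa\le e^{\kappa u_n}$ and the identity
$$\frac{e^{\kappa u_n}}{u_n}=\frac{e^{\kappa\lambda}}{\lambda}+\int_{u_n}^{\lambda}\frac{e^{\kappa u}(1-\kappa u)}{u^2}\,du,$$
then interchange the sum and integral by Fubini, so that the counting function $G(u):=\sum_{u_n\le u}|a_n|$ replaces the individual $a_n$. Dropping $(1-\kappa u)\le 1$ and merging the boundary contribution at $u=\lambda$ into the first error term yields exactly
$$\frac{1}{\pi T}\int_{\theta'/T}^{\lambda}G(u)\,\frac{e^{\kappa u}}{u^2}\,du,$$
which is the second piece in the claimed estimate.

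The main obstacle is the first step: extracting $\mathcal{R}$ with the sharp factor $\pi^{-1}$ and identifying $\theta'$ as the \emph{exact} balance point between the Perron-type bound on the horizontal segments of the rectangle and an auxiliary estimate valid near $y=1$. A lazier estimate here propagates through both output terms and would visibly degrade the downstream explicit constants that Theorem~\ref{mainthm} and its applications are sensitive to; the Fubini/partial-summation bookkeeping in the third step, while routine, also has to be arranged carefully so that the boundary term at $u=\lambda$ lands cleanly in the first main-error term rather than leaving a residue.
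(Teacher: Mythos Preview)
Your overall architecture matches the paper's: a two-branch pointwise bound on the truncated Perron kernel, with $\theta'$ as the crossover (your identification of $\theta'$ as the root of $\theta^2+\pi\theta-1=0$ is correct), followed by a Fubini-type conversion of $1/u_n$ into an integral. But two of your steps, as written, do not go through.

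First, your handling of the near-diagonal range $u_n<\theta'/T$ is too vague. The paper's mechanism is an \emph{exact cancellation}: one writes $\sum_{u_n\ge\theta'/T}\dfrac{|a_n|}{n^\kappa u_n}=\int_{\theta'/T}^\infty\sum_{u_n\le u}\dfrac{|a_n|}{n^\kappa}\,\dfrac{du}{u^2}-\dfrac{T}{\theta'}\sum_{u_n<\theta'/T}\dfrac{|a_n|}{n^\kappa}$, and since the bounded-branch constant is precisely $c=1/(\pi\theta')$, the subtracted piece annihilates the $c\sum_{u_n<\theta'/T}$ contribution. This is what makes $\theta'$ the right threshold, and it is a cancellation rather than an absorption. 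Second, your Fubini identity produces the wrong boundary term. After bounding $(x/n)^\kappa\le e^{\kappa u_n}$ and then applying $e^{\kappa u_n}/u_n=e^{\kappa\lambda}/\lambda+\int_{u_n}^\lambda e^{\kappa u}(1-\kappa u)u^{-2}\,du$, the boundary contribution is $\dfrac{e^{\kappa\lambda}}{\lambda}\sum_{u_n\le\lambda}|a_n|$, which does \emph{not} merge into the first error term $\dfrac{x^\kappa}{\lambda}\sum_n|a_n|/n^\kappa$: for $n<x$ with $u_n\le\lambda$ one has $e^{\kappa\lambda}\ge(x/n)^\kappa$, so your boundary term can strictly exceed the claimed one. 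The paper avoids this by reversing the order of operations: it keeps the weight $|a_n|/n^\kappa$ through the Fubini step (using simply $1/u_n=\int_{u_n}^\infty u^{-2}\,du$), splits the resulting integral at $\lambda$, and only then applies $(x/n)^\kappa\le e^{\kappa u}$ inside the $[\theta'/T,\lambda]$ piece. With that ordering the boundary term at $u=\lambda$ is exactly $\frac{x^\kappa}{\lambda}\sum_n|a_n|/n^\kappa$, as stated.
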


Theorem~\ref{thm:new-main} is proven using Lemma \ref{lem:new}, which is a specific case of \cite[Lem.~2.2]{Ramare_16_Perron}. For both proofs, we will use the step function $$v(y) = \begin{cases} 1 \quad y\geq 0 \\ 0 \quad y<0.\end{cases}$$ 

\begin{lemma}\label{lem:new}
For $\kappa'>0$ and $y\in \mathbb{R}$ we have
\begin{equation}\label{lem}
    \left| v(y) - \frac{1}{2\pi i} \int_{\kappa'-i}^{\kappa'+i} \frac{e^{ys}}{s} \mathrm{d}s \right|  \leq \min\left\lbrace \frac{e^{y\kappa'}}{\pi |y|}, \left| v(y) -\frac{e^{y\kappa'}}{\pi} \arctan(1/\kappa') \right|+\frac{|y|e^{y\kappa'}}{\pi} \right\rbrace.
\end{equation}
\end{lemma}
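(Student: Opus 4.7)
The plan is to prove each of the two bounds in the minimum separately.

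For the bound $e^{y\kappa'}/(\pi|y|)$, I would shift the contour. When $y>0$, close the segment $[\kappa'-i,\kappa'+i]$ to the left using the rectangle with vertices $\kappa'\pm i$ and $-U\pm i$, oriented counterclockwise. Since this loop encloses the simple pole of $e^{ys}/s$ at the origin with residue $1=v(y)$, the residue theorem gives
$$v(y) - \frac{1}{2\pi i}\int_{\kappa'-i}^{\kappa'+i}\frac{e^{ys}}{s}\mathrm{d}s = \frac{1}{2\pi i}\left(\int_{\mathrm{top}}+\int_{\mathrm{left}}+\int_{\mathrm{bottom}}\right)\frac{e^{ys}}{s}\mathrm{d}s.$$
On the left edge $\sigma=-U$ the integrand has modulus at most $e^{-yU}/U$, which vanishes as $U\to\infty$. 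On each horizontal edge $t=\pm 1$ the bound $|1/s|=(\sigma^{2}+1)^{-1/2}\leq 1$ together with $\int_{-\infty}^{\kappa'}e^{y\sigma}\,\mathrm{d}\sigma=e^{y\kappa'}/y$ gives a contribution bounded by $e^{y\kappa'}/y$; dividing by $2\pi$ yields $e^{y\kappa'}/(\pi y)$. When $y<0$, I would close the contour to the right instead; no pole is enclosed, $v(y)=0$, and the symmetric estimate produces the same bound with $|y|$.

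For the second bound, I would first compute
$$\int_{-1}^{1}\frac{\mathrm{d}t}{\kappa'+it} = \int_{-1}^{1}\frac{\kappa'\,\mathrm{d}t}{\kappa'^{2}+t^{2}} = 2\arctan(1/\kappa'),$$
the imaginary part dropping by symmetry. Parametrising $s=\kappa'+it$ in the original integral then yields
$$\frac{e^{y\kappa'}}{\pi}\arctan(1/\kappa') - \frac{1}{2\pi i}\int_{\kappa'-i}^{\kappa'+i}\frac{e^{ys}}{s}\mathrm{d}s = \frac{e^{y\kappa'}}{2\pi}\int_{-1}^{1}\frac{1-e^{iyt}}{\kappa'+it}\mathrm{d}t.$$
Using $|1-e^{iyt}|\leq|yt|$ together with $|\kappa'+it|\geq|t|$, the modulus of the right-hand side is at most $|y|e^{y\kappa'}/\pi$, and the triangle inequality then delivers the second bound.

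The main delicate point is the first estimate: one must confirm that the vertical edge of the auxiliary rectangle genuinely vanishes as $U\to\infty$, and bound the horizontal edges tightly enough (using only $|s|\geq 1$ rather than the looser $|s|\geq\kappa'$) to recover precisely the constant $1/\pi$ in the statement. Integration-by-parts approaches on the tail appear to lose a factor of two here, so the contour-shifting route is preferable. The remainder is direct computation.
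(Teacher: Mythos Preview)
Your proposal is correct and follows essentially the same route as the paper: a rectangular contour shift (left for $y>0$, right for $y<0$) with $|s|\geq 1$ on the horizontal edges to get the $e^{y\kappa'}/(\pi|y|)$ bound, and the identity $\int_{-1}^{1}\frac{dt}{\kappa'+it}=2\arctan(1/\kappa')$ together with $|e^{iyt}-1|\leq|yt|$ and $|\kappa'+it|\geq|t|$ for the second bound. The only cosmetic difference is that the paper writes the decomposition as $\int e^{ys}/s\,ds = e^{y\kappa'}\int ds/s + ie^{y\kappa'}\int_{-1}^{1}(e^{iyt}-1)/(\kappa'+it)\,dt$ rather than subtracting the arctan term directly, but the estimates are identical.
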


\begin{proof}
For $y\in\mathbb{R}$ and $K>\kappa'$ we can evaluate the contour integral
\begin{equation*}
    \left( \int_{\kappa'-i}^{\kappa'+i} + \int_{\kappa'+i}^{K+i} + \int_{K+i}^{K-i} + \int_{K-i}^{\kappa'-i} \right) \frac{e^{ys}}{s}\mathrm{d}s = 0.
\end{equation*}
Consider the case $y<0$: as $K$ approaches infinity the third integral approaches zero, and the two horizontal integrals are bounded by $e^{y\kappa'}/|y|$. Hence,
\begin{equation*}
    \left| \int_{\kappa'-i}^{\kappa'+i} \frac{e^{ys}}{s}\mathrm{d}s \right|\leq \frac{2e^{y\kappa'}}{|y|}. 
\end{equation*}
Therefore, we have
\begin{align*}
    \left| v(y) - \frac{1}{2\pi i} \int_{\kappa'-i}^{\kappa'+i} \frac{e^{ys}}{s}\mathrm{d}s \right| &\leq \frac{e^{y\kappa'}}{\pi|y|}.
\end{align*}
The case $y>0$ is similar, but we use a contour extended to the left, so for $K<0$,
\begin{equation*}
    \left( \int_{\kappa'-i}^{\kappa'+i} + \int_{\kappa'+i}^{K+i} + \int_{K+i}^{K-i} + \int_{K-i}^{\kappa'-i} \right) \frac{e^{ys}}{s}\mathrm{d}s = 2\pi i.
\end{equation*}
Taking $K\rightarrow -\infty$ brings the third integral to zero, so
\begin{equation*}
    \left| 2\pi i - \int_{\kappa'-i}^{\kappa'+i} \frac{e^{ys}}{s}\mathrm{d}s \right| \leq \frac{2e^{y\kappa'}}{y},
\end{equation*}
and thus, for $y>0$
\begin{align*}
    \left| v(y) - \frac{1}{2\pi i} \int_{\kappa'-i}^{\kappa'+i} \frac{e^{ys}}{s}\mathrm{d}s \right| &\leq \frac{e^{y\kappa'}}{\pi y}.
\end{align*}
The above bounds are most useful for large $y$. For small $y$ we can use
\begin{equation*}
    \int_{\kappa'-i}^{\kappa'+i} \frac{e^{ys}}{s}\mathrm{d}s = e^{y\kappa'} \int_{\kappa'-i}^{\kappa'+i} \frac{\mathrm{d}s}{s} + i e^{y\kappa'}\int_{-1}^1 \frac{e^{iyt}-1}{\kappa'+it}\mathrm{d}t.
\end{equation*}
The first integral is equivalent to $2i\text{arctan}(1/\kappa')$. The second integral can be bounded using the identity
\begin{equation*}
    \left| \frac{e^{iyt}-1}{iyt} \right| = \left| \int_{0}^1 e^{iytu} \mathrm{d}u \right| \leq 1.
\end{equation*}
Hence, for all $y\in\mathbb{R}$,
\begin{align*}
    \left| v(y) - \frac{1}{2\pi i} \int_{\kappa'-i}^{\kappa'+i} \frac{e^{ys}}{s}\mathrm{d}s \right| &\leq \left| v(y) - \frac{e^{y\kappa'}}{\pi} \text{arctan}(1/\kappa') \right| + \frac{|y|e^{y\kappa'}}{\pi}.\qedhere
\end{align*}
\end{proof}

\begin{proof}[Proof of Theorem~\ref{thm:new-main}]
We aim to bound
\begin{equation*}
    \left| \sum_{n\geq 1} a_n v(T\log(x/n)) - \frac{1}{2\pi i} \int_{\kappa-iT}^{\kappa+iT} F(s) \frac{x^s}{s} \mathrm{d}s \right| x^{-\kappa}
\end{equation*}
for any $x\geq 1$, $T>0$, and $\kappa>\kappa_a>0$, where $\kappa_a$ is the abscissa of absolute convergence of $F(s)$. First, we take $\kappa = \kappa' T$ to reach
\begin{align}
    &\left| \sum_{n\geq 1} a_n v(T\log(x/n)) - \frac{1}{2\pi i} \int_{\kappa-iT}^{\kappa+iT} F(s) \frac{x^s}{s} \mathrm{d}s \right| x^{-\kappa}\notag\\
    &\qquad\leq \sum_{n\geq 1} \frac{|a_n|}{n^\kappa} \left| v(T\log(x/n)) - \frac{1}{2\pi i} \int_{\kappa-iT}^{\kappa+iT} \left(\frac{x}{n}\right)^{s}\frac{\mathrm{d}s}{s} \right| \left(\frac{n}{x}\right)^{\kappa}  \notag\\
    &\qquad= \sum_{n\geq 1} \frac{|a_n|}{n^\kappa} \left| v(T\log(x/n)) - \frac{1}{2\pi i} \int_{\kappa'-i}^{\kappa'+i} e^{T\log(x/n)w}\frac{\mathrm{d}w}{w}  \right| e^{-\kappa' T\log(x/n)}.\label{perroneq1}
\end{align}
Next, we apply Lemma \ref{lem:new}. Suppose for some $\theta> 0$ there exists a positive constant $c$ such that
\begin{align}
    \max_{|y|< \theta} \left( \min\left( \frac{1}{\pi |y|}, \left| \frac{v(y)}{e^{y\kappa'}} - \frac{1}{\pi} \arctan(1/\kappa') \right| + \frac{|y|}{\pi} \right) \right) \leq c  \label{c2}.
\end{align}
Splitting the sum in \eqref{perroneq1} at $\theta$ and taking $y=T\log(x/n)$ in Lemma \ref{lem:new} gives
\begin{align}
    &\left| \sum_{n\geq 1} a_n v(T\log(x/n)) - \frac{1}{2\pi i} \int_{\kappa-iT}^{\kappa+iT} F(s) \frac{x^s}{s} \mathrm{d}s \right| x^{-\kappa}\notag \\
    &\qquad\leq c\sum_{T|\log(x/n)| < \theta} \frac{|a_n|}{n^\kappa} + \frac{1}{\pi T} \sum_{T|\log(x/n)| \geq \theta} \frac{|a_n|}{n^\kappa |\log(x/n)|},\label{perron2eq}
\end{align}
and note that the first bound in (\ref{lem}) was used to obtain the second term in (\ref{perron2eq}). We then have
\begin{align*}
    \sum_{T|\log(x/n)| \geq \theta} \frac{|a_n|}{n^\kappa |\log(x/n)|} &= \sum_{T|\log(x/n)| \geq \theta} \frac{|a_n|}{n^\kappa} \int_{|\log(x/n)|}^\infty \frac{\mathrm{d}u}{u^2} \\
    &= \int_{\theta/T}^\infty \sum_{\theta/T \leq |\log(x/n)| \leq u} \frac{|a_n|}{n^\kappa} \frac{\mathrm{d}u}{u^2} \\
    &= \int_{\theta/T}^\infty \sum_{|\log(x/n)| \leq u} \frac{|a_n|}{n^\kappa} \frac{\mathrm{d}u}{u^2} - \frac{T}{\theta} \sum_{T|\log(x/n)|< \theta} \frac{|a_n|}{n^\kappa}.
\end{align*}
Hence,
\begin{align}
    &\left| \sum_{n\geq 1} a_n v(T\log(x/n)) - \frac{1}{2\pi i} \int_{\kappa-iT}^{\kappa+iT} F(s) \frac{x^s}{s} \mathrm{d}s \right| x^{-\kappa} \notag\\
    &\qquad\leq \frac{1}{\pi T} \int_{\theta/T}^\infty \sum_{|\log(x/n)| \leq u} \frac{|a_n|}{n^\kappa} \frac{\mathrm{d}u}{u^2}  +  \left(c - \frac{1}{\pi \theta} \right) \sum_{T|\log(x/n)|< \theta} \frac{|a_n|}{n^\kappa}. \label{boundWithc2}
\end{align}
Over $T|\log(x/n)|<\theta$ we have $\left| v(y)e^{-y\kappa'} - \arctan(1/\kappa')/\pi \right|<1$, so we can take
\begin{equation*}
    c = \max_{|y|<\theta} \left( \min\left( \frac{1}{\pi |y|}, 1 + \frac{|y|}{\pi} \right) \right) = \frac{\sqrt{\pi^2 + 4}}{2 \pi} + \frac{1}{2}.
\end{equation*}
This implies that if we choose $\theta\leq \theta':=2/(\sqrt{\pi^2+4}+\pi)$ then the second term of (\ref{boundWithc2}) vanishes. Also, the integral in the first term of (\ref{boundWithc2}) will be minimised for larger $\theta$. Hence, we will take $\theta = \theta'$. This leaves us with
\begin{align*}
    &\left| \sum_{n\geq 1} a_n v(T\log(x/n)) - \frac{1}{2\pi i} \int_{\kappa-iT}^{\kappa+iT} F(s) \frac{x^s}{s} \mathrm{d}s \right| \leq \frac{x^{\kappa}}{\pi T} \int_{\theta'/T}^\infty \sum_{|\log(x/n)| \leq u} \frac{|a_n|}{n^\kappa} \frac{\mathrm{d}u}{u^2}.
\end{align*}

Splitting the integral from $\theta'/T$ to $\infty$ at $\lambda$ then gives the further bound
\begin{align*}
    x^\kappa \int_{\theta'/T}^\infty \sum_{|\log(x/n)| \leq u} \frac{|a_n|}{n^\kappa} \frac{\mathrm{d}u}{u^2} &\leq x^\kappa\sum_{n\geq 1} \frac{|a_n|}{n^\kappa} \int_{\lambda}^\infty \frac{\mathrm{d}u}{u^2}  +  \int_{\theta'/T}^{\lambda} \sum_{|\log(x/n)| \leq u} |a_n| \left(\frac{x}{n}\right)^\kappa \frac{\mathrm{d}u}{u^2} \\ 
    &\leq \frac{x^\kappa}{\lambda} \sum_{n\geq 1} \frac{|a_n|}{n^\kappa}  +  \int_{\theta'/T}^{\lambda} \sum_{|\log(x/n)| \leq u} |a_n| \frac{e^{\kappa u}}{u^2} \mathrm{d}u,
\end{align*}
from which Theorem~\ref{thm:new-main} follows.
\end{proof}

\section{Zero-free regions}\label{zfsect}
There are a range of explicit zero-free regions for $\zeta(s)$ in the literature. We will combine several existing results to have an optimal zero-free region for different heights in the complex plane. There is no need for zero-free regions for small $\Im(s)$, however, as the Riemann hypothesis has been verified up to a height $H$. The most recent computation of $H$ is from Platt and Trudgian \cite{P_T-RH_21}.
\begin{lemma}[{\cite{P_T-RH_21}}]\label{rheightlem}
    If $\zeta(\beta+i t)=0$ for any $|t|\leq 3\cdot 10^{12}$ then $\beta=\frac{1}{2}$.
\end{lemma}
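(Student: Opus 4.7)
The statement is a computational verification of the Riemann hypothesis up to height $H = 3\cdot 10^{12}$, so any ``proof'' is necessarily a rigorous large-scale computation rather than a clean analytic argument. My plan would be to follow the standard Turing-style framework refined in the work of Platt (and later sharpened jointly with Trudgian).

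First I would reduce the problem to counting zeros on the critical line. By the functional equation and reflection symmetry, it suffices to show that for a sequence of heights $0 = T_0 < T_1 < \cdots < T_N$ with $T_N \geq 3\cdot 10^{12}$, the number of zeros $\rho = \beta + i\gamma$ of $\zeta(s)$ with $0 < \gamma \leq T_j$ in the critical strip equals the number of sign changes of the Hardy $Z$-function $Z(t) = e^{i\vartheta(t)}\zeta(\tfrac12 + it)$ on $[0, T_j]$. If these two counts agree, every zero up to height $T_j$ lies on $\Re(s) = 1/2$, which gives the lemma.

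Next I would produce the two counts rigorously. For the analytic count I would invoke the Riemann--von Mangoldt formula $N(T) = \vartheta(T)/\pi + 1 + S(T)$, and control $S(T) = \pi^{-1}\arg\zeta(\tfrac12 + iT)$ via Turing's method, which bounds $\int_{T}^{T+h} S(t)\,dt$ and hence pins down the integer $N(T)$ once $\zeta$ has been evaluated rigorously at a few points slightly above $T$. For the line-count I would locate sign changes of $Z(t)$ on a fine enough grid. The evaluations of $\zeta(\tfrac12 + it)$ would be carried out using the Riemann--Siegel formula (for moderate $t$) and a fast band-limited Fourier method of Hiary/Platt in the regime near $10^{12}$, everything implemented in validated interval arithmetic so that each numerical step produces a mathematically rigorous enclosure.

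Finally I would package the computation into manageable intervals (Gram points or a similar partition) and, for each interval, check that the number of observed sign changes of $Z(t)$ equals the count forced by Turing's method; whenever a ``Gram block'' fails, subdivide further or apply a local variant. The main obstacle — and the real content of the Platt--Trudgian result — is engineering: making the sampling dense enough that no sign change is missed (so the error terms in Turing's method stay below $1$) while keeping the evaluations of $\zeta$ at $\sim 10^{12}$ tractable and fully rigorous. Since this is a faithfully recorded computation, I would simply cite \cite{P_T-RH_21} for the lemma rather than reproduce the enormous verification here.
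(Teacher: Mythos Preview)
Your proposal is correct in spirit and in its final conclusion: the paper gives no proof of this lemma at all, treating it purely as a citation to \cite{P_T-RH_21}. Your outline of the Turing/Gram-block verification methodology is accurate as a description of what underlies that reference, but it goes well beyond what the paper does; since you end by citing \cite{P_T-RH_21} anyway, you and the paper land in the same place.
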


The most recent explicit version of the ``classical" zero-free region is from Mossinghoff and Trudgian \cite{M_T_2015}.

\begin{lemma}[{\cite{M_T_2015}}]\label{classlem}
    For $|t|\geq 2$ there are no zeros of $\zeta(\beta+it)$ in the region $\beta\geq 1-\nu_1(t)$, where
    \begin{equation*}
        \nu_1(t)=\frac{1}{R_0\log |t|}
    \end{equation*}
    and\footnote{This value of $R_0$ is lower than that appearing in \cite[Theorem 1]{M_T_2015}. However, since the Riemann hypothesis has now been verified to a higher height (Lemma~\ref{rheightlem}), we can take $R_0=5.5666305$ as discussed in \cite[Section 6.1]{M_T_2015}.} $R_0=5.5666305$.
\end{lemma}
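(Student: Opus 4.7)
The plan is essentially an appeal to the literature, since Lemma~\ref{classlem} is a direct quotation of the Mossinghoff--Trudgian zero-free region from \cite{M_T_2015}, rather than a result proved from scratch here. First I would cite \cite[Theorem~1]{M_T_2015}, which establishes that $\zeta(s)$ has no zeros in a region of the form $\beta \geq 1 - 1/(R \log|t|)$ for $|t|\geq 2$, with an explicit admissible value of the constant $R$ that depends on the verification height $H$ for the Riemann hypothesis used as input to the optimisation.

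The only substantive point to address is why the constant $R_0 = 5.5666305$ stated in Lemma~\ref{classlem} is admissible, since the value quoted in the main theorem of \cite{M_T_2015} was obtained using a smaller value of $H$ than is now known. The plan is to refer to \cite[Section~6.1]{M_T_2015}, where the authors describe exactly how their numerical optimisation depends on the height up to which the Riemann hypothesis has been verified, and note that substituting the current Platt--Trudgian height $H = 3\cdot 10^{12}$ from Lemma~\ref{rheightlem} into that optimisation yields $R_0 = 5.5666305$. This is the content of the footnote attached to the lemma, and no further argument is needed beyond the pointer to \cite{M_T_2015} together with the updated value of $H$.

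The only conceivable obstacle is ensuring that the numerical optimisation from Section~6.1 of \cite{M_T_2015} actually does yield the claimed value of $R_0$ when $H$ is replaced by the Platt--Trudgian height; this is a routine recomputation and I would simply state that the analysis goes through verbatim with the new input, since the structure of the argument in \cite{M_T_2015} is monotone in $H$. Consequently, the proof of Lemma~\ref{classlem} reduces to this two-line citation, and in the paper it is appropriate to omit a formal proof environment altogether, letting the footnote carry the explanation.
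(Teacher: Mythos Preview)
Your proposal is correct and matches the paper's treatment exactly: the lemma is stated as a citation of \cite{M_T_2015} with no proof environment, and the footnote carries the explanation for the improved constant $R_0=5.5666305$ via the updated verification height from Lemma~\ref{rheightlem}.
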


Ford \cite{Ford_2002} gave a version of the classical result which improves on Mossinghoff and Trudgian's result for large $|t|$.

\begin{lemma}\label{fordclassicregion}
    For $|t|\geq 5.45\cdot 10^8$ there are no zeros of $\zeta(\beta+it)$ in the region $\beta\geq 1-\nu_2(t)$, where
    \begin{equation*}
       \nu_2(t)=\frac{1}{R(|t|)\log |t|},
    \end{equation*}
    with
    \begin{equation*}
        R(t)=\frac{J(t)+0.685+0.155\log\log t}{\log t\left(0.04962-\frac{0.0196}{J(t)+1.15}\right)},
    \end{equation*}
    and
    \begin{equation*}
        J(t)=\frac{1}{6}\log t+\log\log t+\log(0.63).
    \end{equation*}
\end{lemma}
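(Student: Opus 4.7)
The plan is to quote Ford's theorem essentially verbatim from \cite{Ford_2002}; the lemma as stated is his main explicit zero-free region and the cleanest route is to cite it and verify that the printed constants in $R(t)$ and $J(t)$ match the reference. If one were to reprove it from scratch, Ford's method has two stages that I would carry out in sequence.

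Stage one is an explicit upper bound of the shape
\begin{equation*}
|\zeta(\sigma+it)| \leq A|t|^{B(1-\sigma)^{3/2}}(\log|t|)^{2/3}
\end{equation*}
in the strip $1/2\leq \sigma\leq 1$. Ford obtains this from an explicit form of Vinogradov's mean value theorem, which controls short exponential sums $\sum_{N<n\leq 2N} n^{-\sigma-it}$ after a Weyl/van der Corput decomposition; the specific numerical constants $0.04962$, $0.0196$, and $1.15$ in the definition of $R(t)$, as well as the shift $\log(0.63)$ in $J(t)$, trace back to the constants obtained in his explicit bound for Vinogradov's integral and the optimisation over the degree parameter $k$.

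Stage two is the de la Vall\'ee Poussin argument. Assuming $\zeta(\beta+it)=0$ with $\beta$ near $1$, one uses the non-negativity of $3+4\cos\theta+\cos 2\theta$ to write, for $\sigma>1$,
\begin{equation*}
0 \leq -3\,\Re\frac{\zeta'}{\zeta}(\sigma) - 4\,\Re\frac{\zeta'}{\zeta}(\sigma+it) - \Re\frac{\zeta'}{\zeta}(\sigma+2it).
\end{equation*}
The first term is $3/(\sigma-1)+O(1)$ from the pole at $s=1$. The other two are controlled via the Hadamard factorisation of $\xi$, retaining only the contribution $-1/(\sigma-\beta)$ from the hypothetical zero and bounding the remaining sum by $\log|\zeta(\sigma+it)|$ and $\log|\zeta(\sigma+2it)|$ plus $O(\log|t|)$. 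Substituting the stage-one $|\zeta|$ bound, setting $\sigma=1+\eta\,\nu_2(t)$ for a small parameter $\eta>0$, and optimising in $\eta$ yields the desired lower bound $1-\beta \geq \nu_2(t)$ with exactly the $R(t)$ and $J(t)$ displayed.

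The main obstacle is the bookkeeping in stage one: a faithful reproduction of Ford's explicit Vinogradov estimate is delicate, since small losses there propagate into worse constants in $R(t)$ and, downstream, into a worse threshold than $|t|\geq 5.45\cdot 10^8$. This is why the pragmatic move, which I would adopt here, is to treat Ford's theorem as a black box and simply verify that the statement above transcribes \cite[Thm.~3]{Ford_2002} correctly, noting that for $|t|\geq 5.45\cdot 10^8$ the region $\nu_2(t)$ beats the Mossinghoff--Trudgian region $\nu_1(t)$ of Lemma~\ref{classlem}.
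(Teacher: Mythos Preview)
Your plan to simply transcribe Ford's Theorem~3 has a concrete gap: the lemma as stated is \emph{not} a verbatim copy of \cite[Thm.~3]{Ford_2002}. The constant $\log(0.63)$ appearing in $J(t)$ is strictly better than Ford's original, and it comes from replacing Ford's equation~(1.6) --- his explicit bound for $|\zeta(1/2+it)|$ --- by Hiary's sharper bound $|\zeta(1/2+it)|\le 0.63\,t^{1/6}\log t$. So checking that ``the printed constants in $R(t)$ and $J(t)$ match the reference'' would in fact fail, and your black-box citation does not establish the lemma. The paper's proof is precisely this observation: quote Ford's Theorem~3, substitute Hiary's critical-line bound for (1.6), and note the subtlety that an error in Hiary's original paper has since been repaired (and the constant even improved) in later work, so that $0.63$ remains admissible.

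Two smaller points. First, your sketch of ``stage one'' via Vinogradov's mean value theorem describes the input to Ford's Theorem~5 (the Vinogradov--Korobov region, Lemma~\ref{vklem} here), not Theorem~3; the $\tfrac{1}{6}\log t$ in $J(t)$ signals that the relevant input here is the van der Corput--type bound on $|\zeta(1/2+it)|$, which is exactly the piece Hiary improves. Second, your final remark that $\nu_2(t)$ beats $\nu_1(t)$ for $|t|\ge 5.45\cdot 10^8$ is incorrect: the crossover occurs only around $t\ge\exp(46.3)$, as noted after \eqref{mainnueq}. The threshold $5.45\cdot 10^8$ is simply the range of validity inherited from Ford's argument, not a comparison with Lemma~\ref{classlem}.
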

\begin{proof}
    This result is almost identical to \cite[Theorem 3]{Ford_2002}, except we use an improved constant in $J(t)$. Ford's (1.6) can be replaced with a more recent result due to Hiary \cite[Theorem 1.1]{hiary2016explicit}. We note however, that an error was recently discovered in \cite{hiary2016explicit} (see \cite[\S 2.2.1]{patel2021explicit}). Despite this, Hiary's result can be recovered (and in fact improved) as discussed in the preprint \cite{hiary2022improved}.
\end{proof}

For very large $t$, we use an explicit Vinogradov--Korobov zero-free region, also due to Ford \cite{Ford_2002}.

\begin{lemma}[{\cite[Theorem 5]{Ford_2002}}]\label{vklem}
    For $|t|\geq 3$ there are no zeros of $\zeta(\beta+it)$ in the region $\beta\geq 1-\nu_3(t)$ where
    \begin{equation}\label{vkregion}
        \nu_3(t)=\frac{1}{c\log^{2/3}|t|(\log\log |t|)^{1/3}}
    \end{equation}
    and $c=57.54$.
\end{lemma}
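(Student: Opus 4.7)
Since Lemma \ref{vklem} is quoted verbatim from \cite[Theorem 5]{Ford_2002}, the honest plan is to describe the strategy of Ford's proof rather than reproduce his lengthy constant-tracking. The argument follows the classical three-step recipe for deriving a Vinogradov--Korobov zero-free region, with every bound made explicit.

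First, I would establish a Vinogradov-type upper bound of the shape $|\zeta(\sigma+it)| \leq A\,|t|^{B(1-\sigma)^{3/2}}(\log |t|)^{2/3}$ valid in a strip close to the $1$-line, for $|t|$ beyond some threshold. This is the technical heart of the matter: one starts from the approximate functional equation, reduces the problem to bounding exponential sums $\sum_{n\leq N} n^{-it}$ via Weyl-type techniques, and then applies Vinogradov's mean-value theorem to the resulting $k$-th derivative sums. In 2002 Ford used Vinogradov's original proof (the modern decoupling proof of Bourgain--Demeter--Guth was not yet available), and the constant $57.54$ ultimately comes from numerically optimising the parameters $k$, $N$, and the truncation heights in this step.

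Second, I would convert the upper bound on $\log|\zeta(s)|$ into an upper bound on $-\operatorname{Re}(\zeta'(s)/\zeta(s))$ on a slightly smaller disk. This is done by the Borel--Carath\'eodory theorem applied to $\log\zeta$, followed by the Hadamard-product-type identity
\begin{equation*}
    -\operatorname{Re}\frac{\zeta'(s)}{\zeta(s)} = -\operatorname{Re}\sum_{\rho} \frac{1}{s-\rho} + \text{(tame terms)},
\end{equation*}
where the tame terms (involving the trivial zeros, the pole at $s=1$, and Stirling-type contributions) are bounded explicitly.

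Third, I would apply the non-negative trigonometric identity $3 + 4\cos\theta + \cos 2\theta \geq 0$ at the three points $s = \sigma_0$, $\sigma_0 + i\gamma$, $\sigma_0 + 2i\gamma$, assuming a hypothetical zero $\rho = \beta + i\gamma$. Isolating the contribution $-1/(s-\rho)$ at $s = \sigma_0 + i\gamma$ and bounding every other sum over zeros by the upper bound from step two, one obtains an inequality of the form
\begin{equation*}
    \frac{4}{\sigma_0 - \beta} \leq \frac{3}{\sigma_0 - 1} + C(|\gamma|, \sigma_0),
\end{equation*}
where $C$ is driven by the Vinogradov bound. Setting $\sigma_0 - 1$ proportional to $(1-\beta)^{1/2}/((\log|\gamma|)^{2/3}(\log\log|\gamma|)^{1/3})$ and optimising leads to $1-\beta \geq \nu_3(|\gamma|)$ with the claimed $c = 57.54$.

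The main obstacle is Step~1: a completely explicit version of Vinogradov's mean-value theorem with constants competitive with $57.54$ is genuinely delicate, and sharpening $c$ below Ford's value would require either the modern decoupling machinery with careful explicit constants or a fresh optimisation of the classical route. Since the present paper only needs $\nu_3$ as a black box for very large $|t|$, citing \cite{Ford_2002} is the pragmatic choice and no new proof is required here.
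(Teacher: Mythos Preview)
The paper gives no proof whatsoever for this lemma: it is stated as a direct citation of \cite[Theorem 5]{Ford_2002} and used as a black box, exactly as you anticipate in your final paragraph. Your proposal therefore agrees with the paper's treatment, and in fact goes well beyond it by supplying a strategic outline of Ford's argument that the paper omits entirely.

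One small remark on the sketch itself: while your three-step recipe is the right shape, Ford does not use the classical $3+4\cos\theta+\cos 2\theta$ polynomial in Step~3. To squeeze the constant down to $c=57.54$ he employs a longer non-negative trigonometric polynomial (optimised numerically) together with a more delicate choice of $\sigma_0$; the classical polynomial would give a noticeably worse constant. This does not affect the correctness of your plan as a high-level description, but it is worth knowing if you ever intend to reproduce or improve the explicit value.
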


To use the widest zero-free region we set 
\begin{equation}\label{mainnueq}
    \nu(t)=
    \begin{cases}
        \frac{1}{2},&\text{if $|t|\leq 3\cdot 10^{12}$},\\
        \max\{\nu_1(t),\nu_2(t),\nu_3(t)\},&\text{otherwise,}
    \end{cases}    
\end{equation}
noting that $\nu_2(t)\geq\nu_1(t)$ for $t\geq\exp(46.3)$ and $\nu_3(t)\geq\nu_2(t)$ for $t\geq\exp(54599)$.

\section{Wolke's method}\label{wolkesect}
In this section we prove an explicit bound for the integral in Theorem~\ref{thm:new-main},
\begin{equation*}
    \frac{1}{2\pi i}\int_{\kappa-iT}^{\kappa+iT}F(s)\frac{x^s}{s}\mathrm{d}s,
\end{equation*}
for the case $\kappa=1+1/\log x$ and $F(s)=\sum_{n\geq 1}\Lambda(n) n^{-s} = -(\zeta'/\zeta)(s)$.
\begin{theorem}\label{wolkeprop}
    Let $\alpha\in(0,1]$ and $\omega\in[0,1]$. There exists constants $K$ and $x_K$ such that if $x\geq x_K$ and $\max\{51,\log x\}<T<\frac{x^{\alpha}}{2}-1$, 
    \begin{equation}\label{wolkeeq}
        \frac{1}{2\pi i}\int_{1+\varepsilon-iT}^{1+\varepsilon+iT}\left(-\frac{\zeta'}{\zeta}(s)\right) \frac{x^s}{s}\mathrm{d}s = x-\sum_{\substack{\rho=\beta+i\gamma\\|\gamma|\leq T}}\frac{x^\rho}{\rho}+O^*\left(\frac{Kx}{T}(\log x)^{1-\omega}\right),
    \end{equation}
    where $\varepsilon=1/\log x$. Some corresponding values of $\alpha$, $\omega$, $K$ and $x_K$ are given in Table \ref{theorem2table1} in the appendix.
\end{theorem}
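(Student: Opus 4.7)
The plan is to apply Cauchy's residue theorem to the integrand $-\zeta'(s)/\zeta(s)\cdot x^s/s$ on a rectangular contour extended to the left of the line $\Re s = 1+\varepsilon$, and then estimate the integrals along the three other sides. Take the rectangle with vertices $1+\varepsilon\pm iT$ and $-U\pm iT$ for a parameter $U>0$, eventually sent to infinity. Inside this rectangle the integrand has a simple pole at $s=1$ with residue $x$, simple poles at each nontrivial zero $\rho=\beta+i\gamma$ with $|\gamma|<T$ each contributing $-x^\rho/\rho$, a simple pole at $s=0$ coming from the factor $1/s$, and simple poles at the trivial zeros $s=-2k$ each contributing $x^{-2k}/(2k)$. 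The pole at $s=0$ and the trivial-zero contributions together amount to $O(1)$, which is absorbed into the final error term of \eqref{wolkeeq}.

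The heart of the argument is bounding the horizontal integrals at $\Im s = \pm T$. Using the Hadamard partial-fraction expansion for $\zeta'/\zeta$ together with the Riemann--von Mangoldt estimate $N(t+1)-N(t) = O(\log t)$, one aims to establish a pointwise bound of the form
\begin{equation*}
    \left|\frac{\zeta'}{\zeta}(\sigma+iT)\right| \leq C(\log T)^{1-\omega} \qquad\text{for } \sigma\in[-1,1+\varepsilon],
\end{equation*}
where $\omega\in[0,1]$ reflects the strength of the zero-free region from Section \ref{zfsect} invoked at height $T$: $\omega = 0$ corresponds to using the classical Mossinghoff--Trudgian region (Lemma \ref{classlem}), while values of $\omega$ closer to $1$ correspond to the Vinogradov--Korobov region (Lemma \ref{vklem}) for very large $T$. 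Combining this with $|x^s/s|\leq x^\sigma/T$ and integrating in $\sigma$ produces a total horizontal contribution of size $O(x(\log x)^{1-\omega}/T)$, matching the error in \eqref{wolkeeq}.

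For the left vertical integral along $\Re s = -U$, the functional equation yields $|\zeta'/\zeta(-U+it)| = O(\log(U+|t|))$, while $|x^s/s|\leq x^{-U}/|s|$, so this contribution tends to zero as $U\to\infty$ and can be made arbitrarily small. Because the bound must hold for every admissible $T$ rather than on average (the averaging in \cite{Wolke_1983} and \cite{Ramare_16_Perron} being the source of the errors flagged in the introduction), one may need to perturb $T$ by $O(1)$ so that the horizontal lines $\Im s = \pm T$ stay at distance $\gg 1/\log T$ from every zero ordinate; the zero-counting estimate guarantees such a perturbation exists, and its effect on $\sum_{|\gamma|\leq T} x^\rho/\rho$ is $O(x\log T/T)$, which is also absorbed into the stated error.

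The main obstacle is obtaining the explicit bound on $|\zeta'/\zeta(\sigma+iT)|$ with competitive constants, while carefully tracking the trade-off between the width of the zero-free region (which controls $\omega$) and the size of $K$. This forces one to partition the range of $T$ according to Lemmas \ref{rheightlem}--\ref{vklem}, optimise the truncation level in the partial-fraction expansion at each height, and then balance the resulting constants to achieve the entries recorded in Table \ref{theorem2table1}.
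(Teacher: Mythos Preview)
Your plan has a genuine gap: the pointwise bound
\[
\left|\frac{\zeta'}{\zeta}(\sigma+iT)\right| \leq C(\log T)^{1-\omega}\qquad(\sigma\in[-1,1+\varepsilon])
\]
is not obtainable, and the parameter $\omega$ does not encode the zero-free region in the way you describe. After perturbing $T$ by $O(1)$ so that the horizontal line stays at distance $\gg 1/\log T$ from every ordinate, the partial-fraction expansion still only yields $|\zeta'/\zeta(\sigma+iT)|\ll\log^2 T$: there are $\asymp\log T$ nearby zeros, each contributing $\asymp\log T$. No choice of zero-free region (classical or Vinogradov--Korobov) improves this pointwise estimate on the full segment $[-1,1+\varepsilon]$, because the bound is governed by the local density of zeros, not by their horizontal location. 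So even your $\omega=0$ case does not follow from the argument as written.

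In the paper the factor $(\log x)^{1-\omega}$ arises from a completely different mechanism. One perturbs $T$ to a nearby $\tau$ lying in a window of width $2/\log^{\overline{\omega}}x$ (not $O(1)$); this forces $|\tau-\gamma|\geq 1/(\log^{\overline{\omega}}x\,(\log T+1))$ and hence $|\zeta'/\zeta(\sigma+i\tau)|\ll \log^{\overline{\omega}}x\,\log^2 T$ on the horizontal line. The cost of replacing $T$ by $\tau$ in the vertical integral and in the zero sum is what produces the $x(\log x)^{1-\overline{\omega}}/T$ term. The horizontal integral itself is then handled by splitting at $\sigma_0=1-D\nu(x^\alpha)$: on $[-1,\sigma_0]$ the crude $\zeta'/\zeta$ bound is rescued by the factor $x^{\sigma_0-1}=x^{-D\nu(x^\alpha)}$, while on the short segment $[\sigma_0,1+\varepsilon]$ one cannot use the partial-fraction bound at all and must instead invoke Selberg's explicit formula for $-\zeta'/\zeta$ in terms of $\Lambda_y(n)$ (the functions $Z_1,\dots,Z_4$ in the proof). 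This last step is the technical core that your outline is missing; without it, the contribution from $\sigma$ close to $1$ cannot be brought down to $O(x/T)$.
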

Although setting $\omega=1$ gives the best result asymptotically, the resulting constant will be quite large unless $x_K$ is very large. For the best results we take $\omega$ closer to 1 for large $x$ and small $T$, and $\omega$ closer to 0 for small $x$ and large $T$.

To prove Theorem~\ref{wolkeprop} we make \cite[Theorem 2]{Wolke_1983} explicit. The restriction $T>51$ is so we can directly use some results from \cite{Dudek_16p}, which contains a proof of an expression similar to \eqref{wolkeeq} albeit with a weaker error term. The restriction $T>\log x$ is not necessary but allows us to obtain a better value of $K$ for large $x$.

In what follows, let $\overline{\omega}\geq 0$ be a parameter to be optimised in our computations. The value $\omega$ appearing in Theorem~\ref{wolkeprop} will be given by
\begin{equation*}
    \omega=
    \begin{cases}
        \overline{\omega},&\text{if $\overline{\omega}<1$},\\
        1,&\text{if $\overline{\omega}\geq 1$}.
    \end{cases}
\end{equation*}

We also make use of explicit zero-free regions, and set $\nu(t)$ as in \eqref{mainnueq}.

We now prove a couple of preliminary lemmas. In what follows, $N(T)$ denotes the number of zeros of $\zeta(s)$ with imaginary part up to height $T$.

\begin{lemma}\label{plus1minus1lem}
    Let $t>1$. Then $N(t+1)-N(t-1)<\log t$.
\end{lemma}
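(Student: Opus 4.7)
The plan is to apply an explicit Riemann--von Mangoldt formula for $N(T)$ itself and take the difference. I would use a decomposition of the form
\begin{equation*}
    N(T) = \frac{T}{2\pi}\log\frac{T}{2\pi} - \frac{T}{2\pi} + \frac{7}{8} + S(T) + R(T),
\end{equation*}
where $S(T) = \pi^{-1}\arg\zeta(1/2 + iT)$ and $R(T)$ is a small continuous remainder from the Gamma factor in the functional equation, together with an explicit upper bound of the form $|S(T)| \leq c_1\log T + c_2\log\log T + c_3$ (from, e.g., Trudgian's work on the argument of $\zeta$) and an $O(1/T)$ bound on $|R(T)|$.

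The first step is to isolate the smooth main term $M(u) := (u/(2\pi))\log(u/(2\pi)) - u/(2\pi) + 7/8$, whose derivative is $M'(u) = (2\pi)^{-1}\log(u/(2\pi))$. The mean value theorem for integrals then yields
\begin{equation*}
    M(t+1) - M(t-1) = \frac{1}{\pi}\log\frac{\xi}{2\pi}
\end{equation*}
for some $\xi \in (t-1, t+1)$, which in particular is at most $\pi^{-1}\log((t+1)/(2\pi))$. Observe that this alone is comfortably smaller than $\log t$ once $t>1$.

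The second step is to bound the oscillatory contribution $S(t+1)-S(t-1)+R(t+1)-R(t-1)$, by applying the triangle inequality to the explicit $S$-bound. This yields an estimate $2c_1\log(t+1) + O(\log\log t)$, so combining with the first step produces
\begin{equation*}
    N(t+1) - N(t-1) \leq \left(\frac{1}{\pi} + 2c_1\right)\log t + O(\log\log t).
\end{equation*}
With Trudgian-type constants we comfortably have $1/\pi + 2c_1 < 1$, so the claimed bound holds for all $t$ above some explicit threshold $t_0$.

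The main obstacle is the small-$t$ range $1 < t \leq t_0$, where $\log t$ itself is small and the analytic estimate degrades. Fortunately, for $1 < t \leq 13$ the window $(t-1, t+1)$ lies entirely below the first nontrivial zero $\gamma_1 \approx 14.13$, so $N(t+1) - N(t-1) = 0 < \log t$ is immediate. For the intermediate range up to $t_0$, I would verify the inequality by direct inspection of the known zeros, which all lie well within the Platt--Trudgian computational range of Lemma~\ref{rheightlem}. The only delicacy is calibrating $t_0$ so that the numerical check is short while the analytic estimate remains valid above it.
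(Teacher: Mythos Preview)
Your approach is correct and mirrors the paper's overall strategy: establish the bound analytically for large $t$ and verify numerically for small $t$. The paper, however, does not carry out the analytic argument itself; it simply cites Dudek \cite[Lemma~2.6]{Dudek_16p} for the range $t>50$ and then checks $1<t\leq 50$ by inspection of Odlyzko's zero tables. So the structure is the same, but the paper delegates the explicit $N(T)$ work to an existing reference.

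One practical point is worth flagging. With Trudgian's constants (roughly $c_1\approx 0.112$, $c_3\approx 2.51$), the additive term $2c_3\approx 5.02$ in your bound for $|S(t+1)|+|S(t-1)|$ pushes the crossover $t_0$ up to the order of $e^{13}$, not anything near $50$. The ``short'' numerical check you anticipate would then have to cover several hundred thousand zeros. This remains entirely feasible and well within the height of Lemma~\ref{rheightlem}, so there is no genuine gap, but it is considerably more work than the paper's check up to $t=50$ and explains why citing an already-optimised lemma is the economical choice here.
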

\begin{proof}
    Dudek \cite[Lemma 2.6]{Dudek_16p} proves this lemma for $t>50$. For $1<t\leq 50$ we used Odlyzko's tables \cite{odlyzkotable} to verify the lemma manually. 
\end{proof}

\begin{lemma}\label{djslem}
    Let $T>51$ and $x>e$. There exists a $\tau\in(T-1/(\log x)^{\overline{\omega}},T+1/(\log x)^{\overline{\omega}})$ such that when $s=\sigma+i\tau$ with $\sigma>-1$, we have
    \begin{equation*}
        \left|\frac{\zeta'}{\zeta}(s)\right|<(\log x)^{\overline{\omega}}(\log^2T+\log T)+20\log T.
    \end{equation*}
\end{lemma}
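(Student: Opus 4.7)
The strategy is to locate a height $\tau$ in the prescribed window about $T$ at which no imaginary ordinate of a nontrivial $\zeta$-zero lies too close, and then invoke an explicit partial-fraction bound for $\zeta'/\zeta$. The window has length $2/\log^{\overline{\omega}} x$, and the potential obstructions are the ordinates $\gamma$ lying in $[T-1,T+1]$, whose number is controlled by Lemma~\ref{plus1minus1lem}.

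First I would recall an explicit Hadamard-type bound, derivable in the style of \cite[Section 2]{Dudek_16p}, which states that for $s=\sigma+it$ with $\sigma>-1$ and $t$ in the range of interest,
\[
\left|\frac{\zeta'}{\zeta}(s)\right| \leq \sum_{|\gamma-t|\leq 1} \frac{1}{|s-\rho|} + 20\log t.
\]
The constant $20$ comes from Dudek's explicit accounting of the Hadamard product for $\xi(s)$, the trivial zeros, and the tail contribution of zeros outside the window $|\gamma-t|\leq 1$; it is valid for the heights under consideration since $T>51$.

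Next I would choose $\tau$ by pigeonhole. Set $k:=N(T+1)-N(T-1)$, so $k<\log T$ by Lemma~\ref{plus1minus1lem}. Partition $I:=(T-1/\log^{\overline{\omega}} x,\,T+1/\log^{\overline{\omega}} x)$ into $k+1$ equal sub-intervals of length $2/\bigl((k+1)\log^{\overline{\omega}} x\bigr)$. At most $k$ of these sub-intervals can contain an ordinate $\gamma\in[T-1,T+1]$, so at least one is free of them; let $\tau$ be its midpoint. Then $|\tau-\gamma|\geq 1/\bigl((k+1)\log^{\overline{\omega}} x\bigr)$ for every $\gamma$ with $|\gamma-T|\leq 1$. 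Writing $s=\sigma+i\tau$ and using $|s-\rho|\geq|\tau-\gamma|$,
\[
\sum_{|\gamma-\tau|\leq 1} \frac{1}{|s-\rho|} \;\leq\; k(k+1)\log^{\overline{\omega}} x \;<\; (\log^2 T+\log T)\log^{\overline{\omega}} x,
\]
since $k<\log T$ gives $k(k+1)<\log T\cdot(\log T+1)$. Combining with the Hadamard bound yields the lemma.

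The main obstacle is reconciling the sum window $|\gamma-\tau|\leq 1$ appearing in the partial-fraction bound with the interval $[T-1,T+1]$ coming from Lemma~\ref{plus1minus1lem}: since $|\tau-T|\leq 1/\log^{\overline{\omega}} x$ need not be negligible, a zero with $|\gamma-\tau|\leq 1$ can lie slightly outside $[T-1,T+1]$. Any such $\gamma$ satisfies $|\tau-\gamma|\geq 1-1/\log^{\overline{\omega}} x$, which is bounded below by a positive constant for $x$ large, so its contribution is $O(\log T)$ and can be absorbed into the $20\log T$ term. A secondary obstacle is the precise verification of the constant $20$, which requires tracking constants through Dudek's explicit Hadamard estimates under the hypothesis $T>51$.
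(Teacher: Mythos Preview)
Your approach matches the paper's: pigeonhole in $I=(T-1/\log^{\overline\omega}x,\,T+1/\log^{\overline\omega}x)$ to locate $\tau$ far from every ordinate, then invoke Dudek's explicit bound $\zeta'/\zeta(s)=\sum_{|\gamma-\tau|<1}(s-\rho)^{-1}+O^*(19\log\tau)$ together with $19\log\tau\le 20\log T$. The paper partitions $I$ by the zeros themselves into at most $\lfloor\log T\rfloor+1$ zero-free pieces rather than into $k+1$ equal subintervals, but this is cosmetic and yields the same separation $1/((\log T+1)\log^{\overline\omega}x)$.

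One remark on your ``main obstacle'': it is not actually an obstacle. Because $\tau$ is the midpoint of a subinterval of half-width $1/((k+1)\log^{\overline\omega}x)$ that contains \emph{no} zero, the inequality $|\tau-\gamma|\ge 1/((k+1)\log^{\overline\omega}x)$ holds for \emph{every} zero $\gamma$, not merely those in $[T-1,T+1]$ (any $\gamma$ lies outside the chosen subinterval, hence at distance at least its half-width from the midpoint). You may therefore bound the local sum directly by $(N(\tau+1)-N(\tau-1))\cdot(k+1)\log^{\overline\omega}x<\log\tau\,(\log T+1)\log^{\overline\omega}x$, applying Lemma~\ref{plus1minus1lem} at $\tau$ for the count, and absorb the gap between $\log\tau$ and $\log T$ into the $19\to 20$ inflation. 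Your proposed workaround via $|\tau-\gamma|\ge 1-1/\log^{\overline\omega}x$ would in fact fail when $\overline\omega=0$ (the lower bound degenerates to $0$), so it is fortunate that it is unnecessary.
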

\begin{proof}
    By Lemma \ref{plus1minus1lem},
    \begin{equation}\label{ntbounds}
        N(T+1/(\log x)^{\overline{\omega}})-N(T-1/(\log x)^{\overline{\omega}})\leq N(T+1)-N(T-1)<\log T.   
    \end{equation}
    As there are at most $\log T$ zeros of $\zeta(s)$ with imaginary part in the interval $(T-1/(\log x)^{\overline{\omega}},T+1/(\log x)^{\overline{\omega}})$, we can split $(T-1/(\log x)^{\overline{\omega}},T+1/(\log x)^{\overline{\omega}})$ into at most $\lfloor\log T\rfloor+1$ zero-free regions. At least one of these regions will have height greater than 
    \begin{equation*}
        \frac{2}{(\log x)^{\overline{\omega}}(\log T+1)}.
    \end{equation*}
    We define $\tau$ to be the midpoint of such a region so that 
    \begin{equation}\label{distanceeq}
       |\tau-\gamma| \geq\frac{1}{(\log x)^{\overline{\omega}}(\log T+1)} 
    \end{equation}
    for all zeros $\rho=\beta+i\gamma$. Next we use the following result which holds for all $\sigma>-1$ and $\tau>50$ \cite[p. 187]{Dudek_16p}
    \begin{equation}\label{logderbound}
        \frac{\zeta'}{\zeta}(s)=\sum_{|\gamma-\tau|<1}\frac{1}{s-\rho}+O^*(19\log \tau).
    \end{equation}
    Using \eqref{ntbounds} and \eqref{distanceeq},
    \begin{equation*}
        \left|\sum_{|\gamma-\tau|<1}\frac{1}{s-\rho}\right|\leq\sum_{|\gamma-\tau|<1}\frac{1}{|\tau-\gamma|}<(\log x)^{\overline{\omega}}(\log T+1)\log T
    \end{equation*}
    which completes the proof of the lemma upon noting that $19\log\tau\leq 20\log T$.
\end{proof}

We are now in a position to prove Theorem~\ref{wolkeprop}.
\begin{proof}[Proof of Theorem~\ref{wolkeprop}]
Letting $\tau$ be as in Lemma \ref{djslem}, we define the contour $C=C_1\cup C_2\cup C_3\cup C_4$, where
\begin{align*}
    &C_1=[1+\varepsilon-i\tau,1+\varepsilon+i\tau], &C_2=[1+\varepsilon+i\tau,-1+i\tau],\\
    &C_3=[-1+i\tau,-1-i\tau], &C_4=[-1-i\tau,1+\varepsilon-i\tau].
\end{align*}
Cauchy's residue theorem gives
\begin{align}\label{maintaueq}
    \frac{1}{2\pi i}\int_{1+\varepsilon-i\tau}^{1+\varepsilon+i\tau}&\left(-\frac{\zeta'}{\zeta}(s)\right)\frac{x^s}{s}\mathrm{d}s\notag\\
    &=x-\sum_{|\gamma|\leq\tau}\frac{x^\rho}{\rho}-\log(2\pi)-\frac{1}{2\pi i}\int_{C_2\cup C_3\cup C_4}\left(-\frac{\zeta'}{\zeta}(s)\right)\frac{x^s}{s}\mathrm{d}s,
\end{align}
noting that $(\zeta'/\zeta)(0)=\log(2\pi)$. We begin by converting the left-hand side of \eqref{maintaueq} into an integral involving $T$ as opposed to $\tau$. To do this, we note that when $\Re(s)=1+\varepsilon$, the main theorem in \cite{Delange_87} gives
\begin{equation*}
    \left|\frac{\zeta'}{\zeta}(s)\right|\leq\left|\frac{\zeta'}{\zeta}(1+\varepsilon)\right|<\log x.
\end{equation*}
Hence,
\begin{align}\label{deltaterm}
    \left|\frac{1}{2\pi i}\int_{1+\varepsilon+iT}^{1+\varepsilon+i\tau}\left(-\frac{\zeta'}{\zeta}(s)\right)\frac{x^s}{s}\mathrm{d}s\right|&\leq\frac{ex}{2\pi (T-1)}|\tau-T|\log x \notag\\
    &\leq\frac{ex}{2\pi (T-1)}(\log x)^{1-\overline{\omega}}
\end{align}
and identically for the integral from $1+\varepsilon-i\tau$ to $1+\varepsilon-iT$. Therefore,
\begin{align*}
    \frac{1}{2\pi i}\int_{1+\varepsilon-i\tau}^{1+\varepsilon+i\tau}\left(-\frac{\zeta'}{\zeta}(s)\right)\frac{x^s}{s}\mathrm{d}s=\frac{1}{2\pi i}\int_{1+\varepsilon-iT}^{1+\varepsilon+iT}&\left(-\frac{\zeta'}{\zeta}(s)\right)\frac{x^s}{s}\mathrm{d}s\\
    &+O^*\left(\frac{ex}{\pi(T-1)}(\log x)^{1-\overline{\omega}}\right).
\end{align*}
Next we consider the sum on the right-hand side of \eqref{maintaueq}. In the case $T<\tau$,
\begin{equation*}
    \sum_{|\gamma|\leq\tau}\frac{x^{\rho}}{\rho}=\sum_{|\gamma|\leq  T}\frac{x^{\rho}}{\rho}+\sum_{T<|\gamma|\leq \tau}\frac{x^{\rho}}{\rho}.
\end{equation*}
By \eqref{ntbounds} we then have
\begin{equation*}
    \left|\sum_{T<|\gamma|\leq \tau}\frac{x^{\rho}}{\rho}\right|\leq 2\log \tau\frac{x^{1-\nu(\tau)}}{T}\leq \frac{2x\log (T+1)}{x^{\nu(T+1)}T}.
\end{equation*} 
The case $T>\tau$ is similar, with an error bounded by $2x\log T/x^{\nu(T)}(T-1)$. Hence, 
\begin{equation}\label{minitoughterm}
    \sum_{|\gamma|\leq\tau}\frac{x^{\rho}}{\rho}=\sum_{|\gamma|\leq  T}\frac{x^{\rho}}{\rho}+O^*\left(\frac{2x\log (T+1)}{x^{\nu(T+1)}(T-1)}\right).
\end{equation}
Finally we deal with the integral on the right-hand side of \eqref{maintaueq}. For the integral over $C_3$ we use the inequality \cite[Lemma 2.3]{Dudek_16p}
\begin{equation*}
    \left|\frac{\zeta'}{\zeta}(s)\right|<9+\log|s|,
\end{equation*}
which holds when $\Re(s)=-1$. In particular, 
\begin{align*}
    \left|\frac{1}{2\pi i}\int_{C_3}\left(-\frac{\zeta'}{
    \zeta}(s)\right)\frac{x^s}{s}\mathrm{d}s\right|&\leq\frac{2\tau}{2\pi }\frac{9+\log(\sqrt{\tau^2+1})}{x}\\
    &\leq\frac{(T+1)(9+\log(\sqrt{(T+1)^2+1})}{\pi x}.
\end{align*}
Next we bound the integral over $C_2$. The same bound will also hold for the integral over $C_4$ by symmetry.\\
\\
\textbf{Case 1: $\omega=0$.}

When $\omega=\overline{\omega}=0$ we bound the integral over $C_2$ using Lemma \ref{djslem}. That is,
\begin{align}\label{omegazero}
    \left|\frac{1}{2\pi i}\int_{C_2}\left(-\frac{\zeta'}{\zeta}(s)\right)\frac{x^s}{s}\mathrm{d}s\right|&\leq\frac{\log^2T+21\log T}{2\pi\tau}\int_{-1}^{1+\varepsilon}x^\sigma\mathrm{d}\sigma\notag\notag\\
    &\leq\frac{ex(\log^2T+21\log T)}{2\pi\log x(T-1)}.
\end{align}
\ \\
\textbf{Case 2: $\omega>0$.}

In the case where $\omega>0$ we need to be more careful since the expression in \eqref{omegazero} is not $O(x/T)$ (unless further restrictions are placed on $T$). To overcome this issue, we let $D\in(0,1)$ be a parameter that we will optimise for different entries in Table \ref{theorem2table1} and let $\sigma_0=\sigma_0(x,\alpha):=1-D\nu(x^{\alpha})$. We then split $C_2$ into $C_2=C_{21}\cup C_{22}$ where
\begin{equation*}
    C_{21}=[1+\varepsilon+i\tau,\sigma_0+i\tau],\quad C_{22}=[\sigma_0+i\tau,-1+i\tau].
\end{equation*}
By Lemma \ref{djslem}
\begin{align}\label{toughterm}
    \left|\frac{1}{2\pi i}\int_{C_{22}}\left(-\frac{\zeta'}{\zeta}(s)\right)\frac{x^s}{s}\mathrm{d}s\right|&\leq\frac{(\log x)^{\overline{\omega}}(\log^2T+\log T)+20\log T}{2\pi\tau}\int_{-1}^{\sigma_0}x^\sigma\mathrm{d}\sigma\notag\\
    &\leq x\frac{(\log x)^{\overline{\omega}-1}(\log^2T+\log T)+20\log T/\log x}{2\pi x^{D \nu(x^{\alpha})}(T-1)}.
\end{align}
For the integral over $C_{21}$ we follow Wolke and use the following formula for $\zeta'/\zeta$ \cite[Lemma 2]{selberg1943normal}. Let, for $y=x^{1/4}>1$,
\begin{equation*}
    \Lambda_y(n)=
    \begin{cases}
        \Lambda(n),&1\leq n\leq y\\
        \Lambda(n)\frac{\log(y^2/n)}{\log y},&y\leq n\leq y^2. 
    \end{cases}
\end{equation*}
Then if $s\neq 1$ and $s$ is not a zero of $\zeta$,
\begin{align*}
    -\frac{\zeta'}{\zeta}(s)&=\sum_{n\leq y^2}\frac{\Lambda_y(n)}{n^s}+\frac{y^{1-s}-y^{2(1-s)}}{(1-s)^2\log y}-\frac{1}{\log y}\sum_{q=1}^\infty\frac{y^{-2q-s}-y^{-2(2q+s)}}{(2q+s)^2}\\
    &\qquad\qquad\qquad\qquad\qquad\qquad\qquad-\frac{1}{\log y}\sum_{\rho}\frac{y^{\rho-s}-y^{2(\rho-s)}}{(s-\rho)^2}\\
    &=Z_1(s)+Z_2(s)+Z_3(s)+Z_4(s),\quad\text{say}.
\end{align*}
For $Z_1$,
\begin{align}\label{z1boundeq}
    \left|\frac{1}{2\pi i}\int_{C_{21}}Z_1(s)\frac{x^s}{s}\mathrm{d}s\right|&\leq\frac{1}{2\pi\tau}\sum_{n\leq y^2}\Lambda_y(n)\int_{\sigma_0}^{1+\varepsilon}\left(\frac{x}{n}\right)^\sigma\mathrm{d}\sigma\notag\\
    &\leq\frac{1}{2\pi(T-1)}\sum_{n\leq y^2}\Lambda_y(n)\frac{(x/n)^{1+\varepsilon}}{\log(x/n)}\notag\\
    &\leq\frac{ex}{2\pi(T-1)}\sum_{n\leq x^{1/2}}\frac{\Lambda(n)}{n^c\log(x/n)}\notag\\
    &\leq\frac{ex}{\pi\log x(T-1)}\sum_{n\leq x^{1/2}}\frac{\Lambda(n)}{n^c}.
\end{align}
Then by the corollary of the main theorem in \cite{Ramare_2013},
\begin{equation}\label{ramarelambdaeq}
    \sum_{n\leq x^{1/2}}\frac{\Lambda(n)}{n}\leq \frac{1}{2}\log x-\gamma+\frac{8}{\log^2x}\leq\frac{1}{2}\log x,
\end{equation}
for $x>50$. By partial summation, we then deduce from \eqref{ramarelambdaeq} that
\begin{equation*}
    \sum_{n\geq x^{1/2}}\frac{\Lambda(n)}{n^c}\leq\frac{1}{e}\log x.
\end{equation*}
This gives an $O(x/T)$ bound for \eqref{z1boundeq}.

For $Z_2$,
\begin{align*}
    \left|\frac{1}{2\pi i}\int_{C_{21}}Z_2(s)\frac{x^s}{s}\mathrm{d}s\right|&\leq\frac{1}{2\pi\tau^3\log y}\int_{\sigma_0}^{1+\varepsilon}(x^{\frac{1-\sigma}{4}}+x^{\frac{1-\sigma}{2}})x^\sigma\mathrm{d}\sigma\\
    &\leq\frac{2}{\pi(T-1)^3\log x}\left(\frac{4}{3}\frac{x^{\frac{1+3(1+\varepsilon)}{4}}}{\log x}+2\frac{x^{\frac{1+(1+\varepsilon)}{2}}}{\log x}\right)\\
    &\leq\frac{4x\left(\frac{2}{3}e^{3/4}+e^{1/2}\right)}{\pi(T-1)^3\log^2x}.
\end{align*}

For $Z_3$,
\begin{align*}
    \left|\frac{1}{2\pi i}\int_{C_{21}}Z_3(s)\frac{x^s}{s}\mathrm{d}s\right|&\leq\frac{1}{2\pi\tau^3\log y}\int_{\sigma_0}^{1+\varepsilon}\left(y^{-\sigma}\sum_{q=1}^\infty y^{-2q}+y^{-2\sigma}\sum_{q=1}^\infty y^{-4q}\right)x^{\sigma}\mathrm{d}\sigma\\
    &\leq\frac{2}{\pi(T-1)^3\log x}\left(\frac{(ex)^{3/4}}{\log (x^{3/4})}\frac{1}{y^2-1}+\frac{(ex)^{1/2}}{\log(x^{1/2})}\frac{1}{y^4-1}\right)\\
    &=\frac{2x}{\pi (T-1)^3\log ^2x}\left(\frac{4}{3}\frac{e^{3/4}}{x^{3/4}-x^{1/4}}+\frac{2e^{1/2}}{x^{3/2}-x^{1/2}}\right).
\end{align*}
For $Z_4$, we first note that
\begin{equation}\label{terminaleq}
    1+\varepsilon-\sigma_0=\frac{1}{\log x}+D\nu(x^{\alpha})=d_{\alpha}(x)\nu(x^{\alpha}),
\end{equation}
where $d_{\alpha}(x):=D+1/\nu(x^{\alpha})\log x$ is non-increasing. Moreover, for a zero $\rho=\beta+i\gamma$ with $|\gamma|\leq 2T+2<x^\alpha$,
\begin{equation}\label{betasigmaeq}
    \sigma_0-\beta>(1-D)\nu(x^{\alpha})=:\nu_{\alpha}(x).
\end{equation}
Now,
\begin{align}
    &\left|\frac{1}{2\pi i}\int_{C_{21}}Z_4(s)\frac{x^s}{s} \mathrm{d}s\right|\\
    &\qquad\leq\frac{1}{2\pi\log y}\sum_{\substack{\rho\\|\gamma|\leq2T+2}}\frac{ex}{\tau}\int_{\sigma_0}^{1+\varepsilon}\frac{y^{\beta-\sigma}+y^{2(\beta-\sigma)}}{|\sigma+i\tau-\rho|^2}\mathrm{d}\sigma\notag\\
    &\qquad\qquad+\frac{1}{2\pi\log y}\sum_{\substack{\rho\\|\gamma|>2T+2}}\frac{x}{\tau}\int_{\sigma_0}^{1+\varepsilon}\frac{\left(\frac{x}{y}\right)^{\sigma-1}y^{\beta-1}+\left(\frac{x}{y^2}\right)^{\sigma-1}y^{2(\beta-1)}}{|\sigma+i\tau-\rho|^2}\mathrm{d}\sigma\notag\\
    &\qquad=\frac{2x}{\pi\tau\log x}\left(eF_1(x,\tau)+F_2(x,\tau)\right),\quad\text{say.}\label{z4inteq}
\end{align}
To bound $F_1(x,\tau)$ we will use the estimate
\begin{equation}\label{f1est1}
    \int_{\sigma_0}^{1+\varepsilon}\frac{\mathrm{d}\sigma}{|\sigma+i\tau-\rho|^2}\leq\frac{1}{|\tau-\gamma|^2}d_{\alpha}(x)v(x^\alpha). 
\end{equation}
whenever $|\tau-\gamma|>2$. On the other hand, when $|\tau-\gamma|\leq 2$, we use the estimate
\begin{align}\label{f1est2}
    \int_{\sigma_0}^{1+\varepsilon}\frac{\mathrm{d}\sigma}{|\sigma+i\tau-\rho|^2}\leq\int_{\sigma_0}^{1+\varepsilon}\frac{\mathrm{d}\sigma}{(\sigma-\beta)^2}&=\frac{1}{\sigma_0-\beta}-\frac{1}{1+\varepsilon-\beta}\notag\\
    &<\frac{d_{\alpha}(x)}{(d_{\alpha}(x)+1-D)v_{\alpha}(x)}.  
\end{align}
Using \eqref{f1est1}, \eqref{f1est2} and Lemma \ref{plus1minus1lem} we have that $F_1(x,\tau)$ is bounded above by
\begin{align}\label{toughterm2}
    &\frac{1}{x^{\nu_{\alpha}(x)/4}+x^{\nu_{\alpha}(x)/2}}\left(\sum_{\substack{|\gamma|\leq 2T+2\\|\tau-\gamma|\leq 2}}+\sum_{h=2}^\infty\sum_{\substack{|\gamma|\leq 2T+2\\2(h-1)<|\tau-\gamma|\leq 2h}}\right)\int_{\sigma_0}^{1+\varepsilon}\frac{d\sigma}{|\sigma+i\tau-\rho|^2}\notag\\
    &\leq\frac{1}{x^{\nu_{\alpha}(x)/4}+x^{\nu_{\alpha}(x)/2}}\left(\frac{2d_{\alpha}(x)\log\tau}{(d_{\alpha}(x)+1-D)v_{\alpha}(x)}\right.\notag\\
    &\qquad\qquad\qquad\qquad\qquad\qquad\qquad\left.+2d_{\alpha}(x)\nu(x^{\alpha})\log(2T+1)\sum_{m=1}^\infty\frac{1}{(2m)^2}\right)\notag\\
    &\leq\frac{2d_{\alpha}(x)\log(x^\alpha)}{x^{\nu_{\alpha}(x)/4}+x^{\nu_{\alpha}(x)/2}}\left(\frac{1}{(d_{\alpha}(x)+1-D)v_{\alpha}(x)}+\frac{v(x^\alpha)\pi^2}{24}\right).
\end{align} 

Then,
\begin{align*}
    F_2(x,\tau)&\leq\sum_{\substack{\rho\\|\gamma|>2T+2}}\frac{1}{\left|\frac{\gamma}{2}\right|^2}\int_{\sigma_0}^{1+\varepsilon}(x^{3/4})^{\sigma-1}+(x^{1/2})^{\sigma-1}\mathrm{d}\sigma\\
    &\leq 4\left(\frac{e^{3/4}}{\log(x^{3/4})}+\frac{e^{1/2}}{\log(x^{1/2})}\right)\sum_{\rho}\frac{1}{|\gamma|^2}\\
    &=\frac{4}{\log x}\left(\frac{4}{3}e^{3/4}+2e^{1/2}\right)\sum_{\rho}\frac{1}{|\gamma|^2},
\end{align*}
where $\sum_{\rho} 1/|\gamma|^2\leq 0.04621$ by \cite[Example 1]{brent2021accurate}.

Combining all of our estimates we see that
\begin{equation*}
    \frac{1}{2\pi i}\int_{1+\varepsilon-iT}^{1+\varepsilon+iT}\left(-\frac{\zeta'}{\zeta}(s) \right) \frac{x^s}{s}\mathrm{d}s =x-\sum_{|\gamma|\leq T}\frac{x^\rho}{\rho}+E(x,T),
\end{equation*}
where $|E(x,T)|$ is bounded above by
\begin{align}\label{bigerror1}
    \log(2\pi)+\frac{ex}{\pi(T-1)}\log x+\frac{2x\log (T+1)}{x^{\nu(T+1)}T}&+\frac{(T+1)(9+\log(\sqrt{(T+1)^2+1}))}{\pi x}\notag\\
    &\qquad +\frac{ex(\log^2T+21\log T)}{\pi\log x(T-1)}
\end{align}
when $\omega=\overline{\omega}=0$ and
\begin{align}\label{bigerror2}
    &\log(2\pi)+\frac{ex}{\pi(T-1)}(\log x)^{1-\overline{\omega}}+\frac{2x\log (T+1)}{x^{\nu(T+1)}T}+\frac{(T+1)(9+\log(\sqrt{(T+1)^2+1}))}{\pi x}\notag\\
    &\ +2\left(x\frac{(\log x)^{\overline{\omega}-1}(\log^2T+\log T)+20\log T/\log x}{2\pi x^{D \nu(x^{\alpha})}(T-1)}+\frac{x}{\pi(T-1)}\right.\notag\\
    &\qquad\quad +\frac{4x\left(\frac{2}{3}e^{3/4}+e^{1/2}\right)}{\pi(T-1)^3\log^2x}+\frac{2x}{\pi (T-1)^3\log ^2x}\left(\frac{4}{3}\frac{e^{3/4}}{x^{3/4}-x^{1/4}}+\frac{2e^{1/2}}{x^{3/2}-x^{1/2}}\right)\notag\\
    &\qquad\quad +\frac{4\alpha exd_{\alpha}(x)}{\pi(T-1)(x^{\nu_{\alpha}(x)/4}+x^{\nu_{\alpha}(x)/2})}\left(\frac{1}{(d_{\alpha}(x)+1-D)v_{\alpha}(x)}+\frac{v(x^\alpha)\pi^2}{24}\right)\notag\\
    &\left.\qquad\quad+\frac{8x}{\pi(T-1)\log^2 x}\left(\frac{4}{3}e^{3/4}+2e^{1/2}\right)\sum_{\rho}\frac{1}{|\gamma|^2}\right)
\end{align}
otherwise. Note that the terms multiplied by 2 in \eqref{bigerror2} are those that occur when bounding both the integral over $C_2$ and $C_4$. Thus, we can write $E(x,T)=O^*(Kx(\log x)^{1-\omega}/T)$ where $K$ can be computed by evaluating an upper bound for each error term in \eqref{bigerror1} or \eqref{bigerror2} in the range $x\geq x_K$.
\end{proof}

Extra care must be taken when bounding some of the error terms in the above proof due to $\nu(t)$ being defined as a composite function. Most notably, we need to be careful of the behaviour at the crossover point $\lambda=54598.16\ldots$ such that $\nu_3(t)\geq\nu_2(t)$ for all $t\geq\exp(\lambda)$ or equivalently $\nu_3(t^\alpha)\geq\nu_2(t^\alpha)$ for all $t\geq \exp(\lambda/\alpha)$.

For example, the error in \eqref{toughterm} can be written as $H(x,T)(x(\log x)^{1-\omega}/T)$ where $H$ decreases in $x$ whilst $\nu(T)=1/2$, and increases in $x$ whilst $\nu(T)=\nu_1(T)$ or $\nu(T)=\nu_2(T)$. Thus, in most cases, an upper bound for $H(x,T)$ occurs at or beyond $x=\exp(\lambda/\alpha)$ corresponding to the upper bound $T<\frac{x^\alpha-2}{4}<x^\alpha$ for $T$. As a result, in the cases where $x_K<\exp(\lambda/\alpha)$ we evaluated \eqref{toughterm} at $x=\exp(\lambda/\alpha)$ to obtain an upper bound. In such cases we also had to verify that \eqref{toughterm} was decreasing for $x\geq\exp(\lambda/\alpha)$. A similar treatment is required for the terms in \eqref{minitoughterm} and \eqref{toughterm2}.

We also remark that the error term 
\begin{equation*}
    \frac{2x}{\pi(T-1)}
\end{equation*}
in \eqref{bigerror2} is bounded below by $2x/\pi T$ independent of the choice of $\alpha$ or $\overline{\omega}$. Thus, $K$ is bounded below by $2/\pi=0.63661\ldots$ when $\omega=1$.\\
\\
Since our computations rely heavily on zero-free regions, we can obtain significant improvements by assuming the Riemann hypothesis. In particular, we have the following result.

\begin{theorem}\label{riemannthm1}
    Assuming the Riemann hypothesis, we can take $K=0.6373$ in \eqref{wolkeeq} for $\omega=1$, $\alpha=1/2$ and $\log x\geq 1000$.
\end{theorem}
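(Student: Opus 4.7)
The plan is to re-run the proof of Theorem~\ref{wolkeprop} with $\nu(t) \equiv 1/2$ everywhere, which is legal under the Riemann hypothesis. Every factor $x^{\nu(\cdot)}$ appearing in the error estimate~\eqref{bigerror2} then equals $\sqrt{x}$, so most of the terms become exponentially small in $\log x$ once $\log x \geq 1000$, and $K$ collapses to something only marginally larger than the rigid lower bound $2/\pi = 0.63661\ldots$ identified in the remark at the end of Section~\ref{wolkesect}.

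Concretely, I would fix $\alpha = 1/2$, choose $D = 1/2$ (so $\sigma_0 = 3/4$ and $\nu_\alpha(x) = 1/4$), and pick the free parameter $\overline{\omega} = 3$. Using $\nu(x^{1/2}) = 1/2$, the $C_{22}$ contribution picks up a factor $x^{-D/2} = x^{-1/4}$, the $F_1$-piece of $Z_4$ picks up at most $x^{-\nu_\alpha(x)/2} = x^{-1/8}$ from the denominator of \eqref{toughterm2}, and the $Z_2$, $Z_3$ contributions carry $1/(T-1)^3$. All four of these are far below $10^{-20}$ after multiplication by the polynomial-in-$\log x$ numerators. The $\tau$-to-$T$ transition term $\frac{eT(\log x)^{1-\overline{\omega}}}{\pi(T-1)}$ at $\overline{\omega}=3$ is at most $e \cdot 10^{-6}/\pi$, and the $F_2$-piece of $Z_4$ is $O(1/\log^2 x)$; both are comfortably below $2\cdot 10^{-6}$ for $\log x \geq 1000$. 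The boundary and $x^{-\nu(T+1)}$ terms are similarly negligible.

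The surviving main contribution is the $Z_1$ term $\frac{2x}{\pi(T-1)}$ (inside the factor of $2$ in \eqref{bigerror2}), giving the coefficient $\frac{2T}{\pi(T-1)} \leq \frac{2000}{999\pi} \approx 0.637257$ at the worst-case endpoint $T = \log x = 1000$. Summing all contributions gives $K \leq 0.637257 + O(10^{-5}) < 0.6373$.

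The hard part is the tightness of the numerical budget. The lower bound $2/\pi$ together with the $T/(T-1)$ adjustment at the left endpoint already absorbs all but $\approx 4 \cdot 10^{-5}$ of the target $0.6373$. Fitting the other terms inside this slack forces $\overline{\omega}$ to be at least roughly $3$ (so that $e(\log x)^{1-\overline{\omega}}/\pi$ lies inside the slack) and forces $D$ to lie strictly interior in $(0,1)$: too small and the $\frac{1}{(d_\alpha(x)+1-D)\nu_\alpha(x)}$ factor in the $F_1$ bound diverges, too large and the $x^{-D/2}$ decay of $C_{22}$ or the $x^{-\nu_\alpha(x)/2}$ decay of $F_1$ weakens. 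The balanced choice $D = 1/2$ threads this needle, after which the verification of $K \leq 0.6373$ reduces to a routine (if bookkeeping-heavy) numerical check of each term in~\eqref{bigerror2}.
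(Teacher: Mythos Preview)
Your proposal is correct and follows essentially the same route as the paper: redefine $\nu(t)\equiv\tfrac12$ in the proof of Theorem~\ref{wolkeprop}, take $\overline\omega=3$, and observe that every term in \eqref{bigerror2} except the $Z_1$ contribution $\frac{2x}{\pi(T-1)}$ becomes negligible, leaving $K$ just above $2/\pi$. The only discrepancy is your choice $D=\tfrac12$ versus the paper's $D=0.4$; since under RH the $C_{22}$ and $F_1$ terms carry factors like $x^{-D/2}$ and $x^{-(1-D)/8}$ which are astronomically small at $\log x\geq 1000$ for any interior $D\in(0,1)$, either value works and the distinction is immaterial.
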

\begin{proof}
    Redefine $\nu(t)=\frac{1}{2}$ in the proof of Theorem~\ref{wolkeprop}. The parameters we used to obtain $K=0.6373$ were $D=0.4$ and $\overline{\omega}=3$.
\end{proof}
\begin{remark}
    With more work, the value of $K$ in Theorem~\ref{riemannthm1} can be lowered and in fact made arbitrarily small as $x\to\infty$. In particular, one could bound the integral over $C_2$ using conditional estimates on $\zeta'/\zeta$ (e.g.\ \cite[Corollary 1]{simonic2022estimates}).
\end{remark}

\section{Proof of Theorem~\ref{mainthm}}\label{mainsect}
Using the results of Sections \ref{sectram} and \ref{wolkesect}, we now prove Theorem~\ref{mainthm}. To begin with, we let $x\geq x_M\geq e^{40}$,  $\max\{51,\log x\}<T<\frac{x^{\alpha}}{2}-1$, $\kappa=1+1/\log x$ and set $a_n=\Lambda(n)$ in Theorem~\ref{thm:new-main} to obtain
\begin{align}\label{perronbound1}
        &\psi(x) = \frac{1}{2\pi i} \int_{\kappa-iT}^{\kappa+iT} \left(-\frac{\zeta'}{\zeta}(s)\right) \frac{x^s}{s} \mathrm{d}s \notag\\
        &\qquad\qquad\qquad +O^{*}\left(\frac{x^\kappa}{\pi\lambda T} \sum_{n\geq 1} \frac{\Lambda(n)}{n^\kappa}  +  \frac{1}{\pi T}\int_{\theta'/T}^{\lambda} \sum_{|\log(x/n)| \leq u} \Lambda(n) \frac{e^{\kappa u}}{u^2}\mathrm{d}u\right).
\end{align}
For the first term, we have by Theorem~\ref{wolkeprop},
\begin{equation}\label{newwolkebound}
    \frac{1}{2\pi i}\int_{\kappa-iT}^{\kappa+iT}\left(-\frac{\zeta'}{\zeta}(s)\right)\frac{x^s}{s}\mathrm{d}s=x+\sum_{|\gamma|\leq T}\frac{x^{\rho}}{\rho}+O^*\left(K\frac{x}{T}(\log x)^{1-\omega}\right),
\end{equation}
for some constant $K$ corresponding to $x_K=x_M$. Then, the first sum in \eqref{perronbound1} is bounded by
\begin{equation}\label{firstsumbound}
    \frac{x^\kappa}{\lambda\pi T} \sum_{n\geq 1} \frac{\Lambda(n)}{n^\kappa}\leq\frac{ex\log x}{\lambda\pi T}
\end{equation}
by the main theorem in \cite{Delange_87}. For the second term in \eqref{perronbound1}, the condition $|\log(x/n)|\leq u$ is equivalent to $xe^{-u}\leq n\leq xe^u$. For all $u\in[0,\lambda]$ we have $e^{-u}\geq 1-u$ and $e^u\leq 1+(c_0-1)u$ with $$c_0 = c_0(\lambda) = \frac{e^\lambda-1}{\lambda}+1.$$ Hence, the sum simplifies to
\begin{align}\label{perronbound2}
    \frac{1}{\pi T}\int_{\theta'/T}^{\lambda} \sum_{|\log(x/n)| \leq u} \Lambda(n) \frac{e^{\kappa u}}{u^2}\mathrm{d}u&\leq \frac{e^{\kappa\lambda}}{\pi T}\int^{\lambda}_{\theta'/T}\frac{1}{u^2}\sum_{I(x,u)}\Lambda(n)\mathrm{d}u,
\end{align}
where
\begin{equation*}
    I(x,u)=\left\{n\geq 1\::\:x-ux\leq n\leq x + (c_0-1)ux \right\}.
\end{equation*}
Let $x_-=\max\{x-ux,0\}$ and $x_+ = x+(c_0-1)ux$ for $\theta'/T\leq u\leq \lambda$. Defining 
\begin{equation*}
    \theta(x)=\sum_{p\leq x}\log p,
\end{equation*}
we have by an explicit form of the Brun--Titchmarsh theorem \cite[Theorem 2]{montgomery1973large},
\begin{align*}
    \theta(x_+)-\theta(x_-)&\leq \frac{2 \log x_+}{\log(x_+-x_-)}(x_+-x_-) \\
    &\leq \frac{2 \log( x+(c_0-1)ux)}{\log(c_0 ux)}c_0ux \\
    &\leq c_0 ux\cdot\mathcal{E}_1(x,T),
\end{align*}
where
\begin{equation*}
    \mathcal{E}_1(x,T)=\frac{2 \log( x+(c_0-1)\lambda x)}{\log(c_0 \theta'x/T)}.
\end{equation*}
To obtain the corresponding inequality for $\psi$, we use \cite[Corollary 5.1]{BKLNW_21}, which states that for all $x\geq e^{40}$,
\begin{equation*}
    0<\psi(x)-\theta(x)<a_1x^{1/2}+a_2x^{1/3},
\end{equation*}
with $a_1=1+1.93378\cdot 10^{-8}$ and $a_2=1.0432$. Thus,
\begin{align*}
    \sum_{n\in I(x,u)}\Lambda(n)&\leq \psi(x_+)-\psi(x_-) +\log x\notag\\
    &<c_0ux\cdot \mathcal{E}_1(x,T)+a_1\sqrt{x_+}+a_2\sqrt[3]{x_+}+\log x\\
    &\leq c_0ux\cdot\mathcal{E}_1(x,T)+\mathcal{E}_2(x),
\end{align*}
where
\begin{equation*}
    \mathcal{E}_2(x) = a_1(x+(c_0-1)\lambda x)^{1/2} + a_2(x+(c_0-1)\lambda x)^{1/3} + \log x.
\end{equation*}
Substituting this into \eqref{perronbound2}, we obtain
\begin{align}\label{secondsumbound}
    \frac{e^{\kappa\lambda}}{\pi T}\int^{\lambda}_{\theta'/T}\frac{1}{u^2}\sum_{I(x,u)}\Lambda(n)\mathrm{d}u &\leq \frac{e^{\kappa\lambda}}{\pi T}\int_{\theta'/T}^{\lambda}\left(\frac{c_0 x\cdot \mathcal{E}_1(x,T)}{u}+\frac{\mathcal{E}_2(x)}{u^2}\right)\mathrm{d}u\notag\\
    &\leq\frac{e^{\kappa\lambda}}{\pi}\left(\frac{c_0 x\cdot \mathcal{E}_1(x,T)}{T}\log\left(\frac{\lambda}{\theta'}T\right)+\frac{\mathcal{E}_2(x)}{\theta'}\right).
\end{align}
Combining \eqref{newwolkebound}, \eqref{firstsumbound} and \eqref{secondsumbound}, the proof of Theorem~\ref{mainthm} is complete upon optimising over $\lambda$ to compute values of $x_M$, $\alpha\in(0,1/2]$ and $M$ such that
\begin{equation*}
    \psi(x)=x-\sum_{\substack{\rho=\beta+i\gamma\\|\gamma|\leq T}}\frac{x^\rho}{\rho}+O^*\left(\frac{Mx\log x}{T}\right),
\end{equation*}
for all $x\geq x_M$ and $\max\{51,\log x\}<T<\frac{x^{\alpha}}{2}-1$. Note we cannot take $\alpha>1/2$, as $\mathcal{E}_2(x)$ would not be $O\left(x\log x/T\right)$ for values of $T$ asymptotically larger than $\sqrt{x}$.

Under assumption of the Riemann hypothesis, we also have the following.
\begin{theorem}\label{riemannthm2}
    Assuming the Riemann hypothesis, we can take $M=4.150$ in \eqref{oureq} with $\log x\geq 1000$ and $\alpha=1/2$. 
\end{theorem}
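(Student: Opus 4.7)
The plan is to rerun the proof of Theorem~\ref{mainthm} verbatim, replacing the unconditional bound of Theorem~\ref{wolkeprop} with the sharper conditional bound of Theorem~\ref{riemannthm1}. Concretely, fix $\alpha=1/2$, $\log x\geq 1000$, and $\max\{51,\log x\}<T<(x^{1/2}-2)/2$. Apply Theorem~\ref{thm:new-main} with $a_n=\Lambda(n)$ and $\kappa=1+1/\log x$ to obtain, as in \eqref{perronbound1}, the decomposition of $\psi(x)$ into a truncated Perron integral plus the two Perron error terms that are controlled in \eqref{firstsumbound} and \eqref{secondsumbound}. These last two bounds do not use zero-free information about $\zeta(s)$ at all, so they carry over unchanged.

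The only new ingredient is the estimate for the Perron integral itself. Here I would substitute Theorem~\ref{riemannthm1} in place of Theorem~\ref{wolkeprop}: under RH with $\omega=1$, the Wolke contribution becomes
\[
\frac{1}{2\pi i}\int_{\kappa-iT}^{\kappa+iT}\left(-\frac{\zeta'}{\zeta}(s)\right)\frac{x^s}{s}\mathrm{d}s = x-\sum_{|\gamma|\leq T}\frac{x^\rho}{\rho}+O^{*}\!\left(0.6373\,\frac{x}{T}\right),
\]
since $(\log x)^{1-\omega}=1$. Adding in \eqref{firstsumbound} and \eqref{secondsumbound} then gives an explicit expression of the shape $M(x,T,\lambda)\cdot x\log x/T$ for the total error, where $\lambda$ is the free parameter from Theorem~\ref{thm:new-main}.

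The remaining step is a finite numerical optimisation: choose $\lambda$ to minimise the sum of (i) the $K/\log x$ contribution from Theorem~\ref{riemannthm1}, (ii) the term $e/(\lambda \pi)$ from \eqref{firstsumbound}, and (iii) the Brun–Titchmarsh-driven term
\[
\frac{e^{\kappa\lambda}}{\pi}\!\left(c_0\,\mathcal{E}_1(x,T)\log\!\left(\tfrac{\lambda}{\theta'}T\right)+\tfrac{\mathcal{E}_2(x)}{\theta' x\log x/T}\right)
\]
from \eqref{secondsumbound}, all evaluated over $\log x\geq 1000$ and $T$ in the admissible range. Because these three contributions pull $\lambda$ in opposite directions (small $\lambda$ blows up (ii), large $\lambda$ blows up (iii)), there is a clean interior minimum.

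The main obstacle is purely computational rather than structural: one must verify that the dominant dependence on $x$ and $T$ in $\mathcal{E}_1(x,T)$ and $\mathcal{E}_2(x)$ remains controlled uniformly in the stated range, and that the $\mathcal{E}_2(x)$ term (which behaves like $x^{1/2}$) is absorbed once $T=O(x^{1/2})$; this is the reason $\alpha=1/2$ is sharp. Carrying out this optimisation for a convenient $\lambda$ yields the claimed constant $M=4.150$.
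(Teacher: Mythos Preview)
Your proposal is correct and follows essentially the same route as the paper: rerun the proof of Theorem~\ref{mainthm} with Theorem~\ref{riemannthm1} (giving $K=0.6373$, $\omega=1$) in place of Theorem~\ref{wolkeprop}, then optimise over $\lambda$. The paper's proof is a single sentence specifying the choice $\lambda=0.52$ that yields $M=4.150$.
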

\begin{proof}
    Calculate $M$ as above using $\lambda=0.52$ and the value of $K=0.6373$ from Theorem~\ref{riemannthm1}.
\end{proof}

\section{Application: primes between consecutive powers}\label{powersect}

For sufficiently large $x$, the best unconditional short-interval result for primes is $[x,x+x^{0.525}]$, from Baker, Harman, and Pintz \cite{B_H_P_2001}. This can be narrowed to $(x,x+C\sqrt{x}\log x]$ for some constant $C$ if assuming the Riemann hypothesis, as per Cram{\'e}r's result \cite{cramer1936order}. The latest explicit version of this is \cite[Thm.~1.5]{C_M_S_19}, with $C=22/25$ for all $x\geq 4$. Legendre conjectured that something just better than this should be true: that there should be a prime between $n^2$ and $(n+1)^2$ for all positive integers $n$. This is approximately equivalent to primes in intervals of the form $(x,x+2\sqrt{x}]$. Although proving Legendre's conjecture is out of reach even under RH, we do know there are primes between higher consecutive powers. Using Ingham's method from \cite{Ingham_37}, it was shown in \cite{cully2023primes} that there are primes between consecutive cubes, $n^3$ and $(n+1)^3$, for all $n\geq \exp(\exp(32.892))$.

It is also possible to find primes between higher consecutive powers for all positive $n$. In \cite{cully2023primes}, this is done using an explicit version of Goldston's estimate estimate for the error in the prime number theorem \cite{Goldston_83}. We can now use Theorem~\ref{mainthm} in place of Goldston's result, and prove Theorem~\ref{powerthm}.

To consider the interval $(n^m, (n+1)^m)$, we set $n = x^\frac{1}{m}$ and look at the slightly smaller interval $(x,x+h]$ with $h=mx^{1-1/m}$. Theorem~\ref{mainthm} implies
\begin{equation} \label{psipsi} 
    \psi(x+h) - \psi(x) \geq \ h - \sum_{\substack{\rho=\beta+i\gamma\\|\gamma|\leq T}} \left|\frac{(x+h)^\rho-x^\rho}{\rho}\right| - M \frac{G(x,h)}{T}
\end{equation}
for all $x\geq x_M$ and $\max\{51,\log x\}<T<(x^{\alpha}-2)/2$, where $G(x,h) = (x+h)\log(x+h) + x\log x$, and values for $x_M$, $\alpha$ and $M$ are given in Table \ref{maintable}. In order to find at least one prime in $(x,x+h]$, (\ref{psipsi}) needs to be positive. Fixing an $m$, we want to maximise (\ref{psipsi}) with respect to $T$, and solve for $x$. This process follows the same steps as in Section 4 of \cite{cully2023primes}, and we set $T = x^\mu$ for some $\mu \in (0,1)$.\footnote{Note that $\mu$ here is denoted $\alpha$ in \cite{cully2023primes}.} We then arrive at a similar condition to equation (21) in \cite{cully2023primes}: that there are primes between consecutive $m^{\text{th}}$ powers for all $x$ satisfying
\begin{equation} \label{condition_psi} 
    1 - F(x) - M \frac{G(x,h)}{x^\mu h} + \frac{E(x)}{h} > 0
\end{equation}
where $F(x)$ and $E(x)$ are defined as in equation (19) and (20), respectively, of \cite{cully2023primes}. A zero-free region is used in this definition of $F(x)$, so we use the $\nu(t)$ defined in Section \ref{zfsect} of this paper, instead of that used in \cite{cully2023primes}.

For $\log x_M = 10^3$ we can take $M = 0.6651$, so for $m=140$ we can take $\mu = 0.0080155$ to have (\ref{condition_psi}) hold for all $\log x\geq 4242$. Note that with these values the condition on $T$ in Theorem~\ref{mainthm} is satisfied for all $\log x\geq 375$. The interval estimates for primes in \cite{CH_L_arX} (corrected from the original paper \cite{CH_L_21}) for $x\geq 4\cdot 10^{18}$ and $x\geq e^{600}$ verify that there are primes between consecutive $140^{\text{th}}$ powers for $\log(4\cdot 10^{18}) \leq \log x \leq 4367$. The calculations of \cite{O_H_P_14} verify the remaining smaller values of $x$.


\section{Application: the error term in the prime number theorem}\label{errorsect}
The prime number theorem is equivalent to the statement $\psi(x)\sim x$. For large $x$, the best unconditional estimates on the error $|\psi(x)-x|$ are from Platt and Trudgian \cite[Theorem 1]{P_T_2021}. For $R=5.5734125$ they give values for $X$, $A$, $B$ and $C$ such that
\begin{equation}\label{timdaveeq}
    \left|\frac{\psi(x)-x}{x}\right|\leq A\left(\frac{\log x}{R}\right)^B\exp\left(-C\sqrt{\frac{\log x}{R}}\right)
\end{equation}
for all $\log x\geq X$. To obtain \eqref{timdaveeq}, Platt and Trudgian employ a method of Pintz \cite{Pintz_80} and use Dudek's error term for the Riemann--von Mangoldt formula (Theorem~\ref{dudekthm}).

In this section we prove Theorem~\ref{errorthm}, which is a non-trivial improvement on Platt and Trudgian's result. Most of the improvement comes from using Theorem~\ref{mainthm} in place of Dudek's error term. We will also incorporate some other recent results. 

Firstly, we use the smaller value $R=5.5666305$ which is the same as $R_0$ appearing in the zero-free region in see Lemma \ref{classlem}. In the following lemma, we also make small improvements to some of the zero-density estimates in \cite{K_L_N_2018}.

\begin{lemma}\label{densitylemma}
    Let $N(\sigma,T)$ be the number of zeros $\rho=\beta+i\gamma$ of the Riemann zeta-function with $\sigma<\beta\leq 1$ and $0\leq\gamma\leq T$. Then, for the values of $C_1(\sigma)$ and $C_2(\sigma)$ in Table \ref{densitytable}, we have
    \begin{equation*}
        N(\sigma,T)\leq C_1(\sigma)T^{8(1-\sigma)/3}(\log T)^{5-2\sigma}+C_2(\sigma)\log^2T.
    \end{equation*}
\end{lemma}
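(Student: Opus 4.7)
The plan is to follow the explicit zero-density framework of Kadiri--Lumley--Ng \cite{K_L_N_2018}, refining several intermediate steps so as to obtain sharper constants for the same form of bound. The starting point is a zero-detection argument via a mollified zeta function. Introduce the Dirichlet polynomial $M_Y(s) = \sum_{n \leq Y} \mu(n) n^{-s}$ and the associated function
\begin{equation*}
B_Y(s) = \zeta(s)M_Y(s) - 1 = \sum_{n > Y} b_Y(n)\, n^{-s}, \qquad b_Y(n) = \sum_{\substack{d\mid n \\ d\leq Y}} \mu(d).
\end{equation*}
At each zero $\rho = \beta+i\gamma$ of $\zeta$ with $\beta > \sigma$ one has $B_Y(\rho) = -1$, so that $|B_Y(\rho)|^2 = 1$. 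A Perron/contour-shift step (identical in structure to that in \cite{K_L_N_2018}) then converts the count of zeros into an $L^2$-bound of the form $N(\sigma,T) \ll \int_0^T |B_Y(1/2 + it)|^2\, dt$ plus controllable boundary/truncation errors.

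The next step is to apply Cauchy--Schwarz on the critical line,
\begin{equation*}
\int_0^T |B_Y(1/2+it)|^2\, dt \leq \left(\int_0^T |\zeta(1/2+it)|^4\, dt\right)^{\!1/2} \left(\int_0^T |M_Y(1/2+it)|^4\, dt\right)^{\!1/2},
\end{equation*}
and to control each factor explicitly. The first factor is handled by an explicit form of Ingham's fourth-moment asymptotic $\int_0^T |\zeta(1/2+it)|^4\, dt \sim (2\pi^2)^{-1} T\log^4 T$; the second follows from a Montgomery--Vaughan-type mean-value estimate for Dirichlet polynomials of length $Y$. Balancing the length parameter at $Y = T^{4/3}$ produces the classical Ingham exponent $T^{8(1-\sigma)/3}$, while the log-power $5 - 2\sigma$ emerges from combining the $\log^4 T$ from the fourth moment of $\zeta$ with a factor $(\log Y)^{1-2\sigma}$ picked up when transferring back from $\Re(s)=1/2$ to $\Re(s) = \sigma$ via a Phragm\'en--Lindel\"of convexity step.

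The improvements over \cite{K_L_N_2018} come from two sources: (i) using a sharper explicit fourth-moment bound for $\zeta(1/2+it)$ than was available at the time (drawing on more recent estimates in the Simoni\v{c}/Helfgott circle), and (ii) optimising the free parameters -- the mollifier length $Y$, the auxiliary truncation, and the position of the shifted contour -- numerically for each $\sigma$ rather than in closed form. The secondary term $C_2(\sigma)\log^2 T$ absorbs the tail contribution from low-height zeros together with the error from the Perron truncation.

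The main obstacle I expect is the careful, explicit bookkeeping of multiplicative constants through the Perron step, the convexity bound, and the Cauchy--Schwarz step simultaneously: each of these introduces its own log and power-of-$T$ corrections, and these must be consolidated into the clean form $C_1(\sigma) T^{8(1-\sigma)/3}(\log T)^{5-2\sigma} + C_2(\sigma)\log^2 T$ while keeping the numerical values of $C_1(\sigma)$ strictly below those reported in \cite{K_L_N_2018}. In practice this means re-running the KLN optimisation routine with the improved fourth-moment input, which is a routine but delicate computation rather than a conceptual difficulty.
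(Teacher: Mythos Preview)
Your high-level plan --- follow the Kadiri--Lumley--Ng framework and re-run their optimisation with improved inputs --- is exactly what the paper does. However, you have misidentified \emph{which} inputs get improved, and your sketch of the underlying mechanism does not quite match the KLN argument.

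The paper's two actual refinements are: (a) replacing the RH verification height used in \cite{K_L_N_2018} with the larger $H_0 = 3\cdot 10^{12}$ from Platt--Trudgian \cite{P_T-RH_21}, and (b) replacing the divisor-function estimate appearing as (3.13) in \cite{K_L_N_2018} with the sharper one from \cite{C_T_19}. Neither of these is a fourth-moment bound for $\zeta(1/2+it)$; the KLN machinery for this particular result does not route through a Cauchy--Schwarz step into $\int|\zeta|^4$. The coefficient bounds in their zero-detector come from divisor-type sums (hence why a divisor-function improvement feeds directly into the constants), and the verified RH height enters through the parameter $H$ that truncates the low-lying zeros. The parameter list the paper optimises over --- $k,\mu,\alpha,\delta,d,H,\eta$ in KLN's notation, with specific choices $H=H_0-1$, $\eta=0.2535$, $k=1$, $\mu=1.237$, $\delta=0.313$ --- reflects this structure rather than a mollifier-length/fourth-moment balance.

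So while ``re-run KLN with better inputs'' is correct, your proposed input (a sharper fourth moment) would not reproduce the constants in Table~\ref{densitytable}. You need to swap in the RH height and the divisor bound instead, and then carry out the numerical optimisation over KLN's own parameters.
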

\begin{proof}
    Using Platt and Trudgian's verification of the Riemann hypothesis up to $H_0=3\cdot 10^{12}$ \cite{P_T-RH_21}, and the divisor function estimate in Theorem 2 of \cite{C_T_19} to replace (3.13) of \cite{K_L_N_2018}, we can recalculate the constants in Lemma 4.14 of \cite{K_L_N_2018}. Using the notation from \cite{K_L_N_2018}, we want to optimise over $k$, $\mu$, $\alpha$, $\delta$, $d$, $H$, and $\eta$. We chose $H = H_0 - 1$, $\eta=0.2535$, $k=1$, $\mu = 1.237$, $\delta = 0.313$ and optimised over the other parameters for each $\sigma$.
\end{proof}

\def\arraystretch{1.5}
\begin{table}[h]
\centering
\caption{Some values of $C_1(\sigma)$ and $C_2(\sigma)$ for Lemma \ref{densitylemma}.}
\begin{tabular}{|c|c|c|c|c|c|c|c|}
\hline
$\sigma$ & $\alpha$ & $d$ & $C_1(\sigma)$ & $C_2(\sigma)$ \\
\hline
0.980 & 0.063 & 0.336 & 15.743 & 2.214 \\
\hline
0.982 & 0.063 & 0.336 & 15.878 & 2.204 \\
\hline
0.984 & 0.061 & 0.336 & 16.013 & 2.187 \\
\hline
0.986 & 0.061 & 0.336 & 16.148 & 2.171 \\
\hline
0.988 & 0.060 & 0.337 & 16.284 & 2.148 \\
\hline
0.990 & 0.060 & 0.337 & 16.421 & 2.132\\
\hline
0.992 & 0.058 & 0.337 & 16.558 & 2.115\\
\hline
\end{tabular}
\label{densitytable}
\end{table}

\begin{proof}[Proof of Theorem~\ref{errorthm}]
    We only require a slight modification of the proof of \cite[Theorem 1]{P_T_2021}. First we assume $\log x\geq 1000$ so that we may take $x_M=\exp(1000)$, $\alpha=1/10$ and $M=2.045$ in Theorem~\ref{mainthm}. The argument in \cite[pp. 874--875]{P_T_2021} then follows through with minimal modification so that if $C_1(\sigma)$ and $C_2(\sigma)$ are as in Lemma \ref{densitylemma} and
    \begin{align*}
        k(\sigma,x_0)&=\left[\exp\left(\frac{10-16\sigma}{3}\sqrt{\frac{\log x_0}{R}}  \right)\left(\sqrt{\frac{\log x_0}{R}}\right)^{5-2\sigma}\right]^{-1}\\
        C_3(\sigma,x_0)&=M(\log x_0)\exp\left(-2\sqrt{\frac{\log x_0}{R}}\right)k(\sigma,x_0)\\
        C_4(\sigma,x_0)&=x_0^{\sigma-1}\left(\frac{2}{\pi}\frac{\log x_0}{R}+1.8642\right)k(\sigma,x_0)\\
        C_5(\sigma,x_0)&=8.01\cdot C_2(\sigma)\exp\left(-2\sqrt{\frac{\log x_0}{R}}\right)\frac{\log x_0}{R}k(\sigma,x_0)\\
        A(\sigma,x_0)&=2.0025\cdot 2^{5-2\sigma}\cdot C_1(\sigma)+C_3(\sigma,x_0)+C_4(\sigma,x_0)+C_5(\sigma,x_0),
    \end{align*}
    then
    \begin{equation*}
        \left|\frac{\psi(x)-x}{x}\right|\leq A(\sigma,x_0)\left(\frac{\log x}{R}\right)^{\frac{5-2\sigma}{2}}\exp\left(\frac{10-16\sigma}{3}\sqrt{\frac{\log x}{R}}\right)
    \end{equation*}
    for all $\sigma\in[0.75,1)$ and $x>x_0$ provided $A(\sigma,x)$ is decreasing for $x>x_0$. The only difference between our formulae and those on page 875 of \cite{P_T_2021} is the expression for $C_3(\sigma,x_0)$. Taking $x_0=\exp(X)$ it is then possible to compute the values of $A$ in Table \ref{errortable}. The values of $\sigma$ in Table \ref{errortable} were chosen to optimise the value of $\epsilon_0$.
\end{proof}
\begin{remark}
    Using Theorem~\ref{mainthm} meant that the $C_3(\sigma,x_0)$ term in the above proof became essentially negligible for the values of $x_0$ considered. Thus, reducing $M$ is unlikely to substantially improve Theorem~\ref{errorthm} using the above method.
\end{remark}

We also have the following corollary (cf.\ \cite[Corollary 1]{P_T_2021}).
\begin{corollary}\label{errorcor}
    For each row $\{X,A,B,C\}$ in Table \ref{errortable} we have
    \begin{equation*}
        \left|\frac{\theta(x)-x}{x}\right|\leq A_1\left(\frac{\log x}{R}\right)^B\exp\left(-C\sqrt{\frac{\log x}{R}}\right),\quad\text{for all}\ \log x\geq X 
    \end{equation*}
    where $A_1=A+0.1$.
\end{corollary}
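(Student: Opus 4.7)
The plan is to reduce the corollary to Theorem~\ref{errorthm} using the elementary relation between $\psi(x)$ and $\theta(x)$. Specifically, I would invoke the explicit bound from \cite[Corollary 5.1]{BKLNW_21} already used in Section~\ref{mainsect}, namely
\begin{equation*}
    0 < \psi(x) - \theta(x) < a_1 x^{1/2} + a_2 x^{1/3}, \qquad x \geq e^{40},
\end{equation*}
with $a_1 = 1 + 1.93378\cdot 10^{-8}$ and $a_2 = 1.0432$. By the triangle inequality,
\begin{equation*}
    \left| \theta(x) - x \right| \;\leq\; \left| \psi(x) - x \right| + \bigl( \psi(x) - \theta(x) \bigr).
\end{equation*}

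Dividing through by $x$ and applying Theorem~\ref{errorthm} to the first term on the right, it suffices to show that, for each row $\{X, A, B, C\}$ of Table~\ref{errortable} and all $\log x \geq X$,
\begin{equation*}
    \frac{a_1 x^{-1/2} + a_2 x^{-2/3}}{\left(\log x / R\right)^{B} \exp\bigl(-C \sqrt{\log x / R}\bigr)} \;\leq\; 0.1 ,
\end{equation*}
since this would add at most $0.1 \left(\log x / R\right)^{B} \exp(-C\sqrt{\log x/R})$ to the bound and thus replace $A$ by $A_1 = A + 0.1$.

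This final inequality is routine. The numerator $a_1 x^{-1/2} + a_2 x^{-2/3}$ decays like $x^{-1/2}$, whereas the denominator decays only sub-exponentially in $\sqrt{\log x}$; hence the quotient is monotonically decreasing in $x$ for $x$ sufficiently large (well below $e^{1000}$), so one only needs to verify the bound at the endpoint $x = e^{X}$ of each row. Even in the smallest case $X = 1000$ one has $x^{-1/2} = e^{-500}$ while $\exp(-C\sqrt{\log x/R})$ is only of order $e^{-25}$, so the ratio beats $0.1$ with many orders of magnitude of slack. There is no genuine obstacle here; the only care required is to confirm the monotonicity on $[e^{X}, \infty)$ so that checking the endpoint suffices, which follows immediately from differentiating $\tfrac{1}{2}\log x - B \log\log x - C\sqrt{\log x/R}$ and noting positivity of the derivative for $\log x \geq X \geq 1000$.
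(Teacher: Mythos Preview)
Your proof is correct and follows essentially the same approach as the paper: apply the triangle inequality, use the explicit bound $\psi(x)-\theta(x)<a_1x^{1/2}+a_2x^{1/3}$ from \cite[Corollary~5.1]{BKLNW_21}, and check that the resulting contribution is absorbed by the extra $0.1$. The only minor difference is that the paper quotes the sharper constants $a_1=1+1.99986\cdot 10^{-12}$, $a_2=1+1.936\cdot 10^{-8}$ valid for $x\geq e^{1000}$ rather than the $x\geq e^{40}$ constants you used, but either choice works with enormous slack.
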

\begin{proof}
     By \cite[Corollary 5.1]{BKLNW_21}, we have for $x\geq\exp(1000)$ that
    \begin{align*}
        \psi(x)-\theta(x)<a_1x^{1/2}+a_2x^{1/3},
    \end{align*}
    with $a_1=1+1.99986\cdot 10^{-12}$ and $a_2=1+1.936\cdot 10^{-8}$. The result then follows by a straightforward computation since
    \begin{equation*}
        \left|\frac{\theta(x)-x}{x}\right|\leq\frac{\psi(x)-\theta(x)}{x}+\left|\frac{\psi(x)-x}{x}\right|.\qedhere
    \end{equation*}
\end{proof}

Corollary \ref{errorcor} allows us to improve on existing knowledge of an inequality due to Ramanujan. Namely, in one of his notebooks, Ramanujan proved that
\begin{equation}\label{ramaneq}
    \pi(x)^2<\frac{ex}{\log x}\pi\left(\frac{x}{e}\right)
\end{equation}
holds for sufficiently large $x$ \cite[pp. 112--114]{berndt2012ramanujan}.

It is still an open problem to determine the largest value of $x$ for which \eqref{ramaneq} holds. However, it is widely believed that the last integer counterexample occurs at $x=38,358,837,682$. In fact, this follows under assumption of the Riemann hypothesis \cite[Theorem 1.3]{dudek2015solving}. 

Using their expression for $|\theta(x)-x|$, Platt and Trudgian \cite[Theorem 2]{P_T_2021} were able to show (unconditionally) that \eqref{ramaneq} holds for $x\geq\exp(3915)$. Substituting our results for Corollary \ref{errorcor} when $X=3600$ into the formulae on page 879 of \cite{P_T_2021} gives a small improvement. In particular, we get that Ramanujan's inequality \eqref{ramaneq} holds for $x\geq\exp(3604)$ (Corollary \ref{ramancor}).

We also note that the second author recently showed that Ramanujan's inequality \eqref{ramaneq} holds for $38,358,837,683\leq x\leq\exp(103)$ \cite[Theorem 5.1]{johnston2022improving}. However, we are not able to improve on this result here as Theorem~\ref{errorthm} and Corollary \ref{errorcor} only give good bounds on prime counting functions for large $x\geq\exp(1000)$.

\section*{Acknowledgements}
Thanks to all our colleagues at UNSW Canberra. Particularly our supervisor Tim Trudgian for his constant support, Aleks Simoni\v{c} for helping us with those pesky semicircles, and Ethan Lee for the long discussions over tea. We also thank Ramar{\'e} for his insights and correspondence.

\newpage
\section*{Appendix: Tables \ref{theorem2table1} and \ref{maintable}}
\def\arraystretch{1.5}
\begin{table}[h]
\centering
\caption{Some corresponding values of $x_K$, $\alpha$, $\omega$, $D$ and $K$ for Theorem~\ref{wolkeprop}.}

\begin{tabular}{|c|c|c|c|c|c|}
\hline
$\log(x_K)$ & $\alpha$ & $\omega$ & $\overline{\omega}$ & $D$ & $K$\\
\hline
$40$ & $1/2$ & $0$ & $0$ & --- & $2.053$\\
\hline
$10^3$ & $1/2$ & $0$ & $0$ & --- & $1.673$\\
\hline
$10^{10}$ & $1/2$ & $0.3$ & $0.3$ & $0.54$ & $3.191$\\
\hline
$10^{13}$ & $1/2$ & $1$ & $1.4$ & $0.50$ & $0.6367$\\
\hline
$10^3$ & $1/10$ & $0.2$ & $0.2$ & $0.45$ & $2.596$\\
\hline
$10^{10}$ & $1/10$ & $0.9$ & $0.9$ & $0.54$ & $11.77$\\
\hline
$10^3$ & $1/100$ & $0.8$ & $0.8$ & $0.52$ & $2.186$\\
\hline
$10^{10}$ & $1/100$ & $1$ & $1.5$ & $0.50$ & $0.6367$\\
\hline
\end{tabular}
\label{theorem2table1}
\end{table}

\def\arraystretch{1.5}
\begin{table}[h]
\centering
\caption{Some corresponding values of $x_M$, $\alpha$, $\lambda$ and $M$ for Theorem~\ref{mainthm}.}
\begin{tabular}{|c|c|c|c|}
\hline
$\log(x_M)$ & $\alpha$ & $\lambda$ & $M$\\
\hline
$40$ & $1/2$ & $0.48$ & $6.431$\\
\hline
$10^3$ & $1/2$ & $0.52$ & $5.823$\\
\hline
$10^{10}$ & $1/2$ & $0.52$ & $4.143$\\
\hline
$10^{13}$ & $1/2$ & $0.52$ & $4.140$\\
\hline
$10^3$ & $1/10$ & $1.05$ & $2.045$\\
\hline
$10^{10}$ & $1/10$ & $1.06$ & $1.384$\\
\hline
$10^3$ & $1/100$ & $1.80$ & $0.6651$\\
\hline
$10^{10}$ & $1/100$ & $1.88$ & $0.6269$\\
\hline
\end{tabular}
\label{maintable}
\end{table}

Note that although we could have considered larger values of $\alpha\in(0,1]$ in Table \ref{theorem2table1}, the restriction $\alpha\leq 1/2$ is required for Theorem~\ref{mainthm}.

\newpage

\printbibliography

\end{document}